\documentclass[12pt]{amsart}

\usepackage[latin1]{inputenc}
\usepackage{amsmath,amsthm,amsfonts,amscd,amssymb,xspace,color,graphicx,mathtools}
\usepackage[english]{babel}
\usepackage{xr}
\usepackage{mathrsfs}
\usepackage[colorlinks=true, linkcolor=blue, citecolor=blue]{hyperref}

\newtheorem{Prop}{Proposition}[section]
\newtheorem{Lem}[Prop]{Lemma}
\newtheorem{Cor}[Prop]{Corollary}
\newtheorem{Thm}[Prop]{Theorem}

\theoremstyle{remark}

\newtheorem{Ex}[Prop]{Example}

\numberwithin{equation}{section}

\newcommand{\td}[2]{\xrightarrow[#1\rightarrow #2]{}}
\newcommand{\de}{{\rm d}}

\begin{document}

\title{Strong non-arithmeticity for Zariski dense subsemigroups}


\author{Ion Grama}
\author{Jean-Fran\c cois Quint}
\author{Hui Xiao}

\curraddr[Grama, I.]{Univ Bretagne Sud, CNRS UMR 6205, LMBA, Vannes, France.}
\email{ion.grama@univ-ubs.fr}

\curraddr[Quint, J.-F.]{IMAG, Univ Montpellier, CNRS UMR 5149, Montpellier, France.}
\email{Jean-Francois.Quint@umontpellier.fr}

\curraddr[Xiao, H.]{State Key Laboratory of Mathematical Sciences, Academy of Mathematics and Systems Science, Chinese Academy of Sciences, Beijing 100190, China.}
\email{xiaohui@amss.ac.cn}

\subjclass[2020]{22E40, 20H10}
\keywords{Zariski dense subgroups; non-arithmeticity; cross ratio; loxodromic elements}

\begin{abstract}
We prove a non-arithmeticity property for the complex eigenvalues of a Zariski dense subsemigroup of real reductive algebraic groups. 
This property will be used in \cite{GQX} in order to establish a local limit theorem for the operator norm of products of random matrices.
\end{abstract}

\maketitle

\tableofcontents

\section{Introduction}
\label{intro}

Let $\bf G$ be a connected real reductive group, ${\bf A}\subset {\bf G}$ be a maximal $\mathbb R$-split torus, ${\bf Z}\subset{\bf G}$ be the centralizer of ${\bf A}$ and ${\bf P}\supset {\bf Z}$ be a minimal parabolic subgroup defined over $\mathbb R$. The choice of ${\bf P}$ fixes an associated Weyl chamber $\mathfrak a^+\subset \mathfrak a$, where $\mathfrak a$ is the Lie algebra of ${\bf A}(\mathbb R)$. We set $G={\bf G}(\mathbb R)$, $A={\bf A}(\mathbb R)$, $Z={\bf Z}(\mathbb R)$ and $P={\bf P}(\mathbb R)$. 

To this data, one can associate the Jordan projection $\lambda_{\mathbb R}:{\bf G}(\mathbb R)\rightarrow \mathfrak a^+$, which is defined as follows. Any $g$ in $G$ may be written in a unique way as $g=g_hg_eg_u$, where $g_h$ has real and positive eigenvalues (in any rational representation), $g_e$ belongs to a compact subgroup of $G$ and $g_u$ is unipotent, and these three elements commute with each other. Then, $\lambda_{\mathbb R}(g)$ is the unique element of $\mathfrak a^+$ such that $\exp\lambda_{\mathbb R}(g)$ is conjugated to $g_h$ in $G$ (see \cite{Be1}). 

Recall that an element $g$ in $G$ is said to be loxodromic if $\lambda_{\mathbb R}(g)$ belongs to the interior $\mathfrak a^{++}$ of $\mathfrak a^+$. In this case, $g$ is actually semisimple (that is, $g_u=e$) and if $\gamma$ is an element of $G$ such that $\gamma g_h\gamma^{-1}=\exp\lambda_{\mathbb R}(g)$, one has $\gamma g\gamma^{-1}\in Z$. The image of this element in $Z/[Z,Z]$ does not depend on the choice of $\gamma$ (where we have denoted by $[Z,Z]$ the derived subgroup of $Z$, so that $Z/[Z,Z]$ is the Abelianization of $Z$). Henceforth, we denote it by $\lambda(g)$ : this map $\lambda$, from the set $G^{\rm lox}$ of loxodromic elements of $G$ to $Z/[Z,Z]$, is considered in \cite{GR}.

Let $\Gamma\subset G$ be a Zariski dense subsemigroup. Recall that Prasad \cite{P} proved that $\Gamma$ contains loxodromic elements (and actually that $\Gamma\cap G^{\rm lox}$ is still Zariski dense in $G$). The sets $\lambda_\mathbb R(\Gamma)$ and $\lambda(\Gamma\cap G^{\rm lox})$ play a role in the description of certain dynamical and probabilistic phenomena related to $\Gamma$.

We denote by ${\bf S}=[{\bf G},{\bf G}]$ the derived group of ${\bf G}$, by $S={\bf S}(\mathbb R)$ its group of real points and by $\mathfrak s$ the Lie algebra of $S$.
Benoist \cite{Be2} proved that the closed subgroup of $\mathfrak a$ spanned by $\lambda_\mathbb R(\Gamma)$ contains $\mathfrak a\cap\mathfrak s$, which was later used for dynamical purposes by Conze and Guivarc'h \cite{CG}. In \cite{Q1}, it is shown that actually, the elements $\lambda_{\mathbb R}(gh)-\lambda_{\mathbb R}(g)-\lambda_{\mathbb R}(h)$, $g,h\in \Gamma$, span a dense subgroup of $\mathfrak a\cap \mathfrak s$. This technical statement may be used to prove a local limit theorem in linear groups \cite{BQ}. In \cite{GR}, Guivarc'h and Raugi showed that the closed subgroup of $Z/[Z,Z]$ spanned by $\lambda(\Gamma\cap G^{\rm lox})$ is open (which implies Benoist's original result). The purpose of this note is to merge all of these technical results into a single one, which will allow us to extend the range of the local limit theorem for products of random matrices in \cite{GQX}. We shall prove

\begin{Thm} \label{densityJordan}
Let $\Gamma\subset G$ be a Zariski dense subsemigroup. Then, as $(g,h)$ runs among the set of pairs of loxodromic elements of $\Gamma$ such that the product $gh$ is loxodromic, the closed subgroup spanned by the elements $\lambda(gh)\lambda(g)^{-1}\lambda(h)^{-1}$ contains the connected component of the identity of the Lie group $(Z\cap S)/[Z,Z]$.
\end{Thm}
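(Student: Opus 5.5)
We argue by contradiction through characters. Set $H$ for the closed subgroup of $Z/[Z,Z]$ generated by the elements $\lambda(gh)\lambda(g)^{-1}\lambda(h)^{-1}$, and $C$ for the identity component of $(Z\cap S)/[Z,Z]$. The group $Z/[Z,Z]$ is an abelian Lie group. Its identity component is isomorphic to a product $A\times T$, where $T$ is a compact torus coming from the compact part $M$ of $Z=MA$. If $H\not\supset C$, then $H\cap C$ is a proper closed subgroup of the connected abelian group $C$. Hence there is a nontrivial continuous unitary character $\chi$ of $Z/[Z,Z]$, defined at least on a finite-index open subgroup containing $C$, with $\chi|_H\equiv1$ and $\chi|_C\not\equiv1$. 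After passing to a finite-index sub-semigroup of $\Gamma$ (still Zariski dense, since $\mathbf G$ is connected), we may assume $\chi$ is defined on all values of $\lambda$. The identity $\chi(\lambda(gh))=\chi(\lambda(g))\chi(\lambda(h))$ then holds for all loxodromic pairs. The goal is to show that this forces $\chi$ to be trivial on $C$.

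The main step expresses the defect $\lambda(gh)\lambda(g)^{-1}\lambda(h)^{-1}$ asymptotically through a generalized cross ratio. Take $g,h\in\Gamma$ loxodromic and in general position: the attracting flag of each is transverse to the repelling flag of the other. Then for large $n,m$ the product $g^nh^m$ is loxodromic. We choose representatives: $g$ acts on its attracting/repelling flag pair $(x_g^+,x_g^-)$ through an element $z_g\in Z$. A standard contraction computation, as in Benoist and in Guivarc'h--Raugi, gives
\[
\lambda(g^nh^m)\lambda(g)^{-n}\lambda(h)^{-m}\xrightarrow[n,m\to\infty]{} b(x_g^+,x_h^-,x_h^+,x_g^-),
\]
where $b$ is a $Z/[Z,Z]$-valued cross ratio on quadruples of flags in general position. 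It is defined by writing the four flags in terms of elements of $G/P$ and projecting through the Bruhat decomposition $N^-ZN$ onto $Z/[Z,Z]$. Since $g^n$ and $h^m$ again lie in $\Gamma$ and are loxodromic, the closedness of $H$ gives $b(x_g^+,x_h^-,x_h^+,x_g^-)\in H$. So $\chi\circ b=1$ on all such quadruples coming from $\Gamma$.

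Next we use Zariski density. By Prasad, and Benoist's refinement, the set of pairs $(x_g^+,x_g^-)$ for loxodromic $g\in\Gamma$ is Zariski dense in the space of transverse flag pairs. It is also dense in the product of limit sets, which is a Zariski dense subset of $G/P\times G/P$. The map $\chi\circ b$ is real-analytic and equal to $1$ on a Zariski dense set, but that alone is not enough, because $\chi$ is not algebraic. Instead we use continuity together with the local structure of the limit set: limit points accumulate in every Zariski-open region. The cleaner route is to differentiate. The image of $b$ restricted to $\Lambda_\Gamma^4$ generates a subgroup whose closure contains an open piece of $C$. To show this we fix three flags and vary the fourth along the limit set. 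Using the $N^-$-coordinates, $b$ becomes the $Z$-component of a product of the form $n_1n_2^{-1}n_3\cdots$, so its image contains a Zariski-dense family in the real points of the relevant torus. It remains to argue that a closed subgroup of $C$ containing a Zariski dense subset of the algebraic torus $\mathbf Z\cap\mathbf S$ projected to $Z/[Z,Z]$ must contain $C$. For the split part this is the known argument of \cite{Q1}. For the compact part $M/[M,M]$, a proper closed subgroup lies in the kernel of some character of $\mathbf{T}$, and characters of $M$ extend to algebraic characters of $\mathbf Z$ up to finite index. This produces an algebraic equation violated on a Zariski dense set.

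The main obstacle is this final step: passing from "$\chi\circ b=1$ on $\Lambda_\Gamma^4$" to "$\chi=1$ on $C$" when $\chi$ is unitary and transcendental rather than algebraic. What I would try first is to mix the real and compact parts. A unitary character of $A\times T$ vanishing on $H$ restricts to $\exp(i\langle\alpha,\cdot\rangle)$ on $\mathfrak a$ and to an algebraic character on $T$. I would treat the split direction using the nonlattice result of \cite{Q1}, which says the defects of $\lambda_{\mathbb R}$ span a dense subgroup of $\mathfrak a\cap\mathfrak s$. The compact direction would then be handled by the algebraic argument of Guivarc'h--Raugi, which shows that the image of $\lambda$ generates an open subgroup. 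This combination forces $\alpha=0$ and the torus character to be trivial on $C$.
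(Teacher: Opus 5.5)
\textbf{Verdict: there is a genuine gap.} Your first half follows the paper. The limit formula you quote is Proposition \ref{asymptoticcrossratio}. You call it standard, but the paper needs ${\bf P}$-proximal representations whose highest weights span ${\rm X}({\bf Z})$ (Corollary \ref{Levicharacter2}, Lemma \ref{representcrossratio}) so that the full $Z/[Z,Z]$-value is captured, not only $\lambda_{\mathbb R}$. The passage to limit-set quadruples is Corollary \ref{asymptoticcrossratio2}.

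The gap is in the second half, which is the entire new content of the theorem. The principle you invoke, that a closed subgroup of $C$ containing a Zariski dense subset of the torus must contain $C$, is false. For $G={\rm SL}_2(\mathbb C)$ one has $Z/[Z,Z]\simeq\mathbb C^\star$. The spiral $\{e^{(1+i)t}:t\in\mathbb R\}$ is a proper closed subgroup, and it is Zariski dense, since the only one-dimensional algebraic subtori of ${\rm Res}_{\mathbb C/\mathbb R}\mathbb G_m$ are the split one and the anisotropic one. It is exactly the kernel of the unitary character $re^{i\theta}\mapsto r^{i}e^{-i\theta}$.

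Your fallback cannot exclude such mixed characters. It treats the split direction by \cite{Q1} and the compact direction separately. But a character $e^{i\langle a,\cdot\rangle}\psi$ that is trivial on $H$ imposes no separate condition on either factor, and the spiral projects onto both $\mathbb R^\star_+$ and the circle. Moreover, \cite{GR} concerns the group generated by the values $\lambda(\Gamma\cap G^{\rm lox})$, not the one generated by the defects. This is precisely the point of the Example after the theorem: \cite{Q1} only yields density of the moduli.

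What is missing is an argument that controls modulus and argument simultaneously. The paper proceeds as follows.
\begin{enumerate}
\item After reducing to ${\bf G}$ semisimple and simply connected, Proposition \ref{ZAb} identifies $Z/[Z,Z]$ with an open subgroup of $(\mathbb R^\star)^r\times(\mathbb C^\star)^s$ via $(\chi_\alpha)_{\alpha\in\Delta}$. So a non-open defect group is annihilated near $e$ by a nonzero form $\sum_{\alpha\in\Delta}\Re(c_\alpha\log\chi_\alpha)$ with \emph{complex} coefficients $c_\alpha$.
\item Take a loxodromic $g\in\Gamma$, conjugated into $T$, such that the $|\alpha(g)|$, $\alpha\in\Delta$, are pairwise distinct \cite[Lemma 7.21]{BQ}. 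Pick $\alpha$ with $c_\alpha\neq0$ and $|\alpha(g)|$ minimal.
\item Using \eqref{eqasymptoticexp1}, on the cross ratios $[\xi_0,\eta_0,g^n\xi,\eta]$ the form equals $-\Re\bigl(c_\alpha\alpha(g)^{-n}f(h^n)\bigr)+O(e^{-\varepsilon n}|\alpha(g)|^{-n})$. Here $h=(\beta(g))_{\beta\in\Pi_i}$ and $f(e)=\theta_\alpha(\xi,\eta)$.
\item Proposition \ref{nonzerogeneric1} says $\theta_\alpha$ takes every complex value when $w_i\overline{\alpha}\neq\alpha$. This lets one choose $\xi\in\Lambda_\Gamma$, $\eta\in\Lambda_\Gamma^-$ with $\Re(c_\alpha\theta_\alpha(\xi,\eta))\neq0$, whatever the phase of $c_\alpha$.
\item Equidistribution of $h^n$ in the compact torus it generates, together with Lemma \ref{toricmeasure}, keeps the leading term of size $\asymp|\alpha(g)|^{-n}$ along a subsequence. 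This contradicts the vanishing of the form.
\end{enumerate}
None of these ingredients appears in your proposal: the separation of the $|\alpha(g)|$, the surjectivity of $\theta_\alpha$ onto $\mathbb C$, and the torus equidistribution. Without them, the step from ``$\chi\circ b=1$ on limit-set quadruples'' to ``$\chi=1$ on $C$'' remains unproved.
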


\begin{Ex} Let ${\bf G}={\rm Res}_{\mathbb C/\mathbb R}({\rm SL}_2)$, so that $G={\rm SL}_2(\mathbb C)$, viewed as a real Lie group. An element $g$ in $G$ is loxodromic
if and only if it admits an eigenvalue with modulus $>1$. This eigenvalue $\lambda_1(g)$ is then unique. The theorem says that, if $\Gamma\subset{\rm SL}_2(\mathbb C)$ is a Zariski dense subsemigroup (for the {\em real} Zariski topology), then the set of values of $\lambda_1(gh)\lambda_1(g)^{-1}\lambda_1(h)^{-1}$ spans a dense subgroup of $\mathbb C^\star = \mathbb C \setminus \{0\}$, as $(g,h)$ runs in the set of pairs of loxodromic elements of $\Gamma$ such that the product $gh$ is loxodromic. In the conclusion, dense means dense for the locally compact topology. In \cite{Q1}, it is only shown that, when $(g,h)$ is of the same form, the set of values of $|\lambda_1(gh)\lambda_1(g)^{-1}\lambda_1(h)^{-1}|$ spans a dense subgroup of $\mathbb R^\star_+ = (0, \infty)$.
\end{Ex}

\section{Cross ratios}
\label{crossratios}

We will prove this result by extending the method used in \cite{BQ,Q1}.
This requires us to introduce an appropriate notion of a cross ratio. We
make this precise now. This construction is maybe classical. It follows directly from the properties of the Bruhat decomposition,. 

\subsection{Cross ratios on opposite flag varieties}
\label{flagcrossratios}
Let ${\bf Q}\subset {\bf G}$ be a parabolic subgroup and ${\bf L}\subset{\bf Q}$ be a Levi subgroup. Then, there exists a unique parabolic subgroup ${\bf Q}^-$ of ${\bf G}$ such that ${\bf Q}\cap {\bf Q}^-={\bf L}$. It is called the parabolic subgroup opposite to ${\bf Q}$ with respect to ${\bf L}$. Let ${\bf U}\subset{\bf Q}$ and 
${\bf U}^-\subset{\bf Q}^-$ be the unipotent radicals of ${\bf Q}$ and ${\bf Q}^-$. The Bruhat decomposition says that the product map 
$${\bf U}^-\times {\bf L}\times {\bf U}\rightarrow {\bf G}$$ 
is an isomorphism onto a (Zariski) open subset of ${\bf G}$ (see \cite[Chap. IV]{Bo} or \cite[Chap. X]{Humgp}).
For $g$ in ${\bf U}^- {\bf L} {\bf U}$, we define $\delta(g)$ as being the unique element of ${\bf U}^- g{\bf U}\cap {\bf L}$. The map $\delta:{\bf U}^- {\bf L} {\bf U}\rightarrow {\bf L}$ is regular and, for $g$ in ${\bf G}$ and $\ell$ in ${\bf L}$, one has
\begin{equation}\label{Bruhatequivariance}\delta(\ell g)=\ell\delta(g)\mbox{ and }\delta(g\ell)=\delta(g)\ell.\end{equation}

Let $\mathcal Q$ (resp. $\mathcal Q^-$) be the set of parabolic subgroups of ${\bf G}$ which are conjugated to ${\bf Q}$ (resp. ${\bf Q}^-$); equipped with its natural variety structure, it is called a flag variety of ${\bf G}$. As ${\bf Q}$ and ${\bf Q}^-$ are their own normalizers, one may identify $\mathcal Q$ and $\mathcal Q^-$ with ${\bf G}/{\bf Q}$ and ${\bf G}/{\bf Q}^-$. We will write $\xi_0$ (resp. $\eta_0$)  for ${\bf Q}$ (resp. ${\bf Q}^-$), when seen as an element of $\mathcal Q$ (resp. $\mathcal Q^-$). 

If $\xi$ is in $\mathcal Q$ and $\eta$ is in $\mathcal Q^-$, we will say that they are in general position if the intersection of the associated parabolic subgroups is a Levi subgroup. Equivalently, $\xi$ and $\eta$ are in general position if there exists $g$ in ${\bf G}$ with $\xi=g\xi_0$ and $\eta=g\eta_0$. The set of pairs of elements in general position is Zariski open in $\mathcal Q\times\mathcal Q^-$. The set of $\eta$ in $\mathcal Q^-$ (resp. $\xi$ in $\mathcal Q$) which are in general position with $\xi_0$ (resp. $\eta_0$) is equal to $U\eta_0$ (resp. $U^-\xi_0$).

We denote by $\mathcal W\subset \mathcal Q\times\mathcal Q^-\times \mathcal Q\times\mathcal Q^-$ the set of quadruples $(\xi,\eta,\xi',\eta')$ such that all pairs $(\xi,\eta)$, $(\xi',\eta)$, $(\xi,\eta')$ and $(\xi',\eta')$ are in general position. The cross ratio will be a $G$-invariant regular map
$$(\xi,\eta,\xi',\eta')\mapsto[\xi,\eta,\xi',\eta'], \mathcal W\rightarrow {\bf L}/[{\bf L},{\bf L}].$$

We begin by constructing the cross ratio $[\xi_0,\eta_0,\xi,\eta]$: thus we take $\xi$ in $\mathcal Q$ and $\eta$ in $\mathcal Q^-$ such that $\xi$ is in general position with $\eta_0$ and $\eta$, and $\eta$ is in general position with $\xi_0$. As $\xi$ and $\eta$ are in general position, there exists $g$ in ${\bf G}$ such that 
$\xi=g\xi_0$ and $\eta=g\eta_0$. Since $g\xi_0$ is in general position with $\eta_0$, we have $g\xi_0\in {\bf U}^-\xi_0$, that is, $g\in {\bf U}^-{\bf Q}={\bf U}^-{\bf L}{\bf U}$. In the same way, since $g\eta_0$ is in general position with $\xi_0$, we have $g\eta_0\in {\bf U}\eta_0$, that is, $g\in {\bf U}{\bf Q}^-$, hence $g^{-1}\in{\bf U}^-{\bf L}{\bf U}$. Thus, the elements $\delta(g)$ and $\delta(g^{-1})$ coming from the Bruhat decomposition are well defined in ${\bf L}$. We will define 
$[\xi_0,\eta_0,\xi,\eta]=[\xi_0,\eta_0,g\xi_0,g\eta_0]$ to be the image in ${\bf L}/[{\bf L},{\bf L}]$ of $\delta(g)\delta(g^{-1})$. This is possible thanks to

\begin{Lem} Let $\xi \in \mathcal Q$ and $\eta \in \mathcal Q^-$ be such that $(\xi_0,\eta_0,\xi,\eta)$ belongs to $\mathcal W$. Fix $g$ in ${\bf G}$ such that 
$\xi=g\xi_0$ and $\eta=g\eta_0$. Then, the element $\delta(g)\delta(g^{-1})$ does not depend on the choice of $g$.
\end{Lem}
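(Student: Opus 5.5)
The plan is to identify the ambiguity in the choice of $g$ and then use the equivariance relations \eqref{Bruhatequivariance} to track how $\delta(g)$ and $\delta(g^{-1})$ change. If $g$ and $g'$ both satisfy $g\xi_0=g'\xi_0=\xi$ and $g\eta_0=g'\eta_0=\eta$, then $g^{-1}g'$ fixes both $\xi_0$ and $\eta_0$. Since ${\bf Q}$ and ${\bf Q}^-$ are their own normalizers, this means $g^{-1}g'\in{\bf Q}\cap{\bf Q}^-={\bf L}$. Hence $g'=g\ell$ for some $\ell\in{\bf L}$, and conversely every such $g\ell$ is admissible.

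Next, I would compute how both factors transform. By the right equivariance in \eqref{Bruhatequivariance}, $\delta(g\ell)=\delta(g)\ell$. Since $(g\ell)^{-1}=\ell^{-1}g^{-1}$, the left equivariance gives $\delta((g\ell)^{-1})=\ell^{-1}\delta(g^{-1})$. Both expressions are defined: $g\ell\in{\bf U}^-{\bf L}{\bf U}$ because $g$ is, and $\ell^{-1}g^{-1}\in{\bf U}^-{\bf L}{\bf U}$ because $g^{-1}$ is. This is because ${\bf L}$ normalizes ${\bf U}$ and ${\bf U}^-$, which gives ${\bf L}\,{\bf U}^-{\bf L}{\bf U}={\bf U}^-{\bf L}{\bf U}={\bf U}^-{\bf L}{\bf U}\,{\bf L}$. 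The equivariance formulas themselves follow from the same normalization: ${\bf U}^-g\ell{\bf U}=({\bf U}^-g{\bf U})\ell$.

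Therefore
$$\delta(g\ell)\delta((g\ell)^{-1})=\delta(g)\,\ell\,\ell^{-1}\,\delta(g^{-1})=\delta(g)\delta(g^{-1}).$$
So the product is independent of the choice of $g$ already as an element of ${\bf L}$, not merely after passing to ${\bf L}/[{\bf L},{\bf L}]$.

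There is no real obstacle. The only point needing care is the first step, that the stabilizer of the pair $(\xi_0,\eta_0)$ is exactly ${\bf L}$. This uses the self-normalizing property of parabolic subgroups together with ${\bf Q}\cap{\bf Q}^-={\bf L}$, both of which are recalled in the text above. One should also note that the argument is at the level of points over any field where the decomposition holds, for instance $\mathbb R$ or $\mathbb C$-points. The algebraic statement is then the same.
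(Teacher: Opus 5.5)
Your proposal is correct and follows essentially the same argument as the paper: write $g'=g\ell$ with $\ell\in{\bf Q}\cap{\bf Q}^-={\bf L}$, then apply \eqref{Bruhatequivariance} to get $\delta(g\ell)\delta(\ell^{-1}g^{-1})=\delta(g)\ell\ell^{-1}\delta(g^{-1})$. The only difference is that you also justify why the equivariance formulas hold and why $g\ell$ and $\ell^{-1}g^{-1}$ stay in ${\bf U}^-{\bf L}{\bf U}$, which the paper takes for granted.
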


\begin{proof} Let $g'$ in ${\bf G}$ be such that $\xi=g'\xi_0$ and $\eta=g'\eta_0$. We have $g^{-1}g'\in {\bf Q}\cap{\bf Q}^-={\bf L}$. Set $\ell=g^{-1}g'$. From \eqref{Bruhatequivariance}, we get
$$\delta(g')\delta((g')^{-1})=\delta(g\ell)\delta(\ell^{-1}g^{-1})=\delta(g)\ell\ell^{-1}\delta(g^{-1})=\delta(g)\delta(g^{-1})$$
as required.
\end{proof}

As announced we can therefore set $[\xi_0,\eta_0,\xi,\eta]=[\xi_0,\eta_0,g\xi_0,g\eta_0]$ as being the image of $\delta(g)\delta(g^{-1})$ in ${\bf L}/[{\bf L},{\bf L}]$. We work only in the Abelianization of ${\bf L}$ because we need to define the cross ratio of any quadruple in $\mathcal W$ (which we want to be ${\bf G}$-invariant).

Indeed, there is no choice left: for $(\xi,\eta,\xi',\eta')$ in $\mathcal W$, we choose $g$ in ${\bf G}$ such that $\xi=g\xi_0$ and $\eta=g\eta_0$. We then would like to define the cross ratio $[\xi,\eta,\xi',\eta']$ as being equal to $[\xi_0,\eta_0,g^{-1}\xi',g^{-1}\eta']$. Now, this is possible again thanks to the properties of the $\delta$ map, as claimed in the following

\begin{Lem} Let $(\xi,\eta,\xi',\eta')$ be in $\mathcal W$. Then 
the element $$[\xi_0,\eta_0,g^{-1}\xi',g^{-1}\eta']\in{\bf L}/[{\bf L},{\bf L}]$$ 
does not depend on the choice of a $g$ in ${\bf G}$ such that $\xi=g\xi_0$ and $\eta=g\eta_0$.
\end{Lem}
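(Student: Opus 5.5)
Two choices $g,g'$ with $\xi=g\xi_0=g'\xi_0$ and $\eta=g\eta_0=g'\eta_0$ differ by $\ell=g^{-1}g'\in{\bf Q}\cap{\bf Q}^-={\bf L}$. So I will show that $[\xi_0,\eta_0,\ell\xi,\ell\eta]=[\xi_0,\eta_0,\xi,\eta]$ for every $\ell\in{\bf L}$ and every $(\xi,\eta)$ with $(\xi_0,\eta_0,\xi,\eta)\in\mathcal W$. Applied to the pair $(g^{-1}\xi',g^{-1}\eta')$ and $\ell^{-1}$, this gives the claim, since $(g')^{-1}\xi'=\ell^{-1}g^{-1}\xi'$ and $(g')^{-1}\eta'=\ell^{-1}g^{-1}\eta'$.

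First I check that everything is well defined. Since ${\bf G}$ acts on $\mathcal Q\times\mathcal Q^-$ preserving general position, $g^{-1}$ maps $(\xi,\eta,\xi',\eta')$ to $(\xi_0,\eta_0,g^{-1}\xi',g^{-1}\eta')$, and this quadruple lies in $\mathcal W$. Moreover $\ell\in{\bf L}$ fixes $\xi_0$ and $\eta_0$, so $\mathcal W$ is stable under $\ell$ acting on the last two entries.

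Now fix $h\in{\bf G}$ with $\xi=h\xi_0$ and $\eta=h\eta_0$. Then $\ell h$ satisfies $\ell\xi=\ell h\xi_0$ and $\ell\eta=\ell h\eta_0$. By the previous lemma, $[\xi_0,\eta_0,\ell\xi,\ell\eta]$ is the image of $\delta(\ell h)\delta(h^{-1}\ell^{-1})$. By \eqref{Bruhatequivariance} this equals
\[
\ell\,\delta(h)\,\delta(h^{-1})\,\ell^{-1}.
\]
This element is conjugate to $\delta(h)\delta(h^{-1})$ in ${\bf L}$, so the two have the same image in ${\bf L}/[{\bf L},{\bf L}]$. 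That image is $[\xi_0,\eta_0,\xi,\eta]$, which proves the invariance.

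The only genuine point is this last step, where the conjugation by $\ell$ disappears. This is exactly why one must pass to the abelianization: in ${\bf L}$ itself the element $\ell\,\delta(h)\delta(h^{-1})\,\ell^{-1}$ need not equal $\delta(h)\delta(h^{-1})$. Everything else is bookkeeping with the equivariance \eqref{Bruhatequivariance} and the fact that the stabilizer of the pair $(\xi_0,\eta_0)$ is ${\bf L}$.
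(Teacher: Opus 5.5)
Your proof is correct and is essentially the paper's own argument. You write $g'=g\ell$ with $\ell\in{\bf L}$, pick $h$ with $g^{-1}\xi'=h\xi_0$ and $g^{-1}\eta'=h\eta_0$, and use the equivariance of $\delta$ to get $\delta(\ell^{-1}h)\delta(h^{-1}\ell)=\ell^{-1}\delta(h)\delta(h^{-1})\ell$, which has the same image in ${\bf L}/[{\bf L},{\bf L}]$. Your extra check that the translated quadruple stays in $\mathcal W$ is a harmless addition that the paper leaves implicit.
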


\begin{proof} Again, if $g'$ is another element of ${\bf G}$ such that $\xi=g'\xi_0$ and $\eta=g'\eta_0$, we have $g'=g\ell$ for some $\ell$ in ${\bf L}$, and hence, we need to compare $[\xi_0,\eta_0,g^{-1}\xi',g^{-1}\eta']$ with $[\xi_0,\eta_0,\ell^{-1}g^{-1}\xi',\ell^{-1}g^{-1}\eta']$. Choose $h$ in ${\bf G}$ with 
$g^{-1}\xi'=h\xi_0$ and $g^{-1}\eta'=h\eta_0$. On one hand, by definition, we have
$$[\xi_0,\eta_0,g^{-1}\xi',g^{-1}\eta']=\delta(h)\delta(h^{-1})[{\bf L},{\bf L}].$$
On the other hand, we have,
$\ell^{-1}g^{-1}\xi'=\ell^{-1}h\xi_0$ and $\ell^{-1}g^{-1}\eta'=\ell^{-1}h\eta_0$ and hence, thanks to \eqref{Bruhatequivariance},
\begin{multline*}[\xi_0,\eta_0,\ell^{-1}g^{-1}\xi',\ell^{-1}g^{-1}\eta']=\delta(\ell^{-1}h)\delta(h^{-1}\ell)[{\bf L},{\bf L}]\\
=\ell^{-1}\delta(h)\delta(h^{-1})\ell[{\bf L},{\bf L}]
=\delta(h)\delta(h^{-1})[{\bf L},{\bf L}].\end{multline*}
The lemma follows.
\end{proof}

As a consequence, for $(\xi,\eta,\xi',\eta')$ in $\mathcal W$, we define the cross ratio by $[\xi,\eta,\xi',\eta']=[\xi_0,\eta_0,g^{-1}\xi',g^{-1}\eta']$ where $g$ is in ${\bf G}$ and $\xi=g\xi_0$ and $\eta=g\eta_0$. 
The terminology is justified by the following

\begin{Ex} Recall that the usual cross ratio is defined, for any four elements $s,t,u,v$ in $\mathbb P^1_{\mathbb C}=\mathbb C\cup\{\infty\}$, with $s\neq t$, $s\neq v$, $t\neq u$, $u\neq v$, by 
$$[s,t,u,v]=\frac{(s-v)(u-t)}{(s-t)(u-v)}$$
(with the usual meaning when some of $s,t,u,v$ are $\infty$).

Let ${\bf G}={\rm GL}_2$ be the general linear group of invertible $2 \times 2$ matrices. We choose ${\bf Q}$ to be the group of upper triangular matrices and ${\bf L}$ to be the group of diagonal matrices. Then, ${\bf Q}^-$ is the group of lower triangular matrices. The variety $\mathcal Q=\mathcal Q^-$ may be identified with $\mathbb P^1_{\mathbb C}$ by the map which sends $z$ in $\mathbb C$ to the stabilizer in ${\rm GL}_2(\mathbb C)$ of the vector line $\mathbb C(z,1)$ and $\infty$ to 
the stabilizer in ${\rm GL}_2(\mathbb C)$ of the vector line $\mathbb C(1,0)$. Saying that two elements of $\mathbb P^1_{\mathbb C}$ are in general position (viewed as flags) is saying that they are different. Then, we claim that, for 
$s,t,u,v$ in $\mathbb P^1_{\mathbb C}=\mathbb C\cup\{\infty\}$, with $s\neq t$, $s\neq v$, $t\neq u$, $u\neq v$, the cross ratio built above is just the element of ${\bf L}$, that is, the diagonal matrix given by
$$\begin{pmatrix}[s,t,u,v]& 0\\ 0 & [s,t,u,v]^{-1}\end{pmatrix}.$$

Indeed, as both cross ratio maps are ${\rm GL}_2(\mathbb C)$-invariant, it suffices to prove this formula when $s=\xi_0=\infty$ and $t=\eta_0=0$. Then, we need to compute $\delta(g)\delta(g^{-1})$ where $g$ is in ${\rm GL}_2(\mathbb C)$ and $u=g\infty$, $v=g 0$. We can take $g=\begin{pmatrix} u&v\\ 1&1\end{pmatrix}$, which yields 
$$\delta(g)=\begin{pmatrix} u&0\\ 0&\frac{u-v}{u}\end{pmatrix}\mbox{ and }\delta(g^{-1})=\begin{pmatrix} \frac{1}{u-v}&0\\ 0&1\end{pmatrix}$$
hence 
$$\delta(g)\delta(g^{-1})= \begin{pmatrix} \frac{u}{u-v}&0\\ 0&\frac{u-v}{u}\end{pmatrix}=\begin{pmatrix}[\infty,0,u,v]& 0\\ 0 & [\infty,0,u,v]^{-1}\end{pmatrix}$$
as claimed.
\end{Ex}

\subsection{Real cross ratios} 
\label{realcrossratio}
We now specialize this construction for the minimal $\mathbb R$-parabolic subgroups. We let ${\bf Q}$ be the minimal parabolic subgroup ${\bf P}$ and we take ${\bf L}$ to be equal to ${\bf Z}$. We also write $\mathcal P=\mathcal Q=\mathcal Q^-$ for the maximal real flag manifold of ${\bf G}$. We denote the space of real points of $\mathcal P$ by $\mathcal P_{\mathbb R}$: this is a set of minimal parabolic $\mathbb R$-subgroups of ${\bf G}$ and it may be identified with $G/P$ (see \cite[Chap. V]{Bo}).

If $\xi,\eta,\xi',\eta'$ in $\mathcal P_\mathbb R$ are such that $(\xi,\eta,\xi',\eta')\in\mathcal W$, by construction, the cross ratio $[\xi,\eta,\xi',\eta']$ belongs to ${\bf Z}({\mathbb R})/([{\bf Z},{\bf Z}])(\mathbb R)$. This group is the same as $Z/[Z,Z]$:

\begin{Lem}\label{goodAbelian} The natural map $[Z,Z]\rightarrow ([{\bf Z},{\bf Z}])(\mathbb R)$ is an isomorphism.
\end{Lem}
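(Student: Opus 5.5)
The plan is to show that the abstract commutator subgroup $[Z,Z]$ of $Z={\bf Z}(\mathbb R)$ coincides with the group of real points ${\bf H}(\mathbb R)$ of the derived algebraic group ${\bf H}=[{\bf Z},{\bf Z}]$.

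Injectivity is immediate. Every commutator of real points is a real point of ${\bf H}$, so $[Z,Z]\subset {\bf H}(\mathbb R)$, and the natural map is just this inclusion. The work is therefore in proving surjectivity, that is, ${\bf H}(\mathbb R)\subset [Z,Z]$.

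\emph{Step 1: ${\bf H}$ is anisotropic.} Since ${\bf G}$ is reductive and ${\bf Z}$ is the centralizer of a torus, ${\bf Z}$ is connected reductive. Hence ${\bf H}$ is a connected semisimple $\mathbb R$-group. Suppose ${\bf T}\subset{\bf H}$ were a nontrivial $\mathbb R$-split torus. It commutes with ${\bf A}$, since ${\bf A}$ is central in ${\bf Z}$. Then ${\bf A}{\bf T}$ would be an $\mathbb R$-split torus, and it would be strictly larger than ${\bf A}$, because ${\bf A}\cap{\bf H}$ is finite (${\bf A}$ is central and ${\bf H}$ is semisimple). This contradicts the maximality of ${\bf A}$. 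So ${\bf H}$ is $\mathbb R$-anisotropic.

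\emph{Step 2: $H:={\bf H}(\mathbb R)$ is a compact, connected, semisimple Lie group.} By the classical fact that a semisimple $\mathbb R$-group is anisotropic if and only if its real points are compact, $H$ is compact. Connectedness I would take from the standard result that, for a connected reductive $\mathbb R$-group with compact real points, the real points are connected in the Hausdorff topology. One route is that every element of a compact group of this kind lies in a maximal torus, and the real points of an anisotropic torus form a connected product of circles; alternatively, it follows from the Cartan decomposition, where $H=K$ and $K$ meets every component. Its Lie algebra $\mathfrak h$ is semisimple, so $[\mathfrak h,\mathfrak h]=\mathfrak h$.

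\emph{Step 3: $H=[H,H]\subset[Z,Z]$.} The commutator subgroup of a connected Lie group is a path-connected subgroup. By Yamabe's theorem it is an immersed Lie subgroup, and its Lie algebra contains $[\mathfrak h,\mathfrak h]=\mathfrak h$, so it is open in $H$. Since $H$ is connected, it equals $H$. (Alternatively, I would cite Goto's theorem that every element of a compact connected semisimple Lie group is a single commutator.) Because $H\subset Z$, this gives $H=[H,H]\subset[Z,Z]$, which is the desired surjectivity.

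The main obstacle is Step 2, specifically the connectedness of ${\bf H}(\mathbb R)$: real points of connected algebraic groups are in general disconnected. Here I would rely on the compactness obtained in Step 1 together with the torus argument above. If citing this fact feels too heavy, it suffices to check that each element of $H$ lies in the identity component.
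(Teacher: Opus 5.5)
Your proposal is correct and follows the paper's route: the paper also shows that $[{\bf Z},{\bf Z}]$ is connected and $\mathbb R$-anisotropic, so its real points $H$ form a compact connected semisimple Lie group, and then concludes with a Lie algebra argument. The only difference is the last step. You use $H=[H,H]\subset[Z,Z]$, whereas the paper compares $[Z,Z]$ with $H$ as a subgroup having the same Lie algebra; your version has the small advantage of not needing the paper's unexplained assertion that $[Z,Z]$ is closed.
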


\begin{proof} The group ${\bf Z}$ being the centralizer of a maximal split torus, it is connected and the group $[{\bf Z},{\bf Z}]$ is connected and $\mathbb R$-ani\-so\-tro\-pic. Therefore, the group $([{\bf Z},{\bf Z}])(\mathbb R)$ is a compact connected semisimple Lie group. Since the closed subgroup $[Z,Z]\subset ([{\bf Z},{\bf Z}])(\mathbb R)$ has the same Lie algebra, we obtain $([{\bf Z},{\bf Z}])(\mathbb R)=[Z,Z]$.
\end{proof}

From now on, we consider $Z/[Z,Z]$ as a subset of $({\bf Z}/[{\bf Z},{\bf Z}])(\mathbb R)$. By construction, for $\xi,\eta,\xi',\eta'$ as above, the cross ratio $[\xi,\eta,\xi',\eta']$ belongs to $Z/[Z,Z]$.

If $g$ is a loxodromic element in $G$, we denote by $\xi_g^+$ the attractive fixed point of $g$ in $\mathcal P_{\mathbb R}$: this is the flag $h\xi_0$ where $h$ is such that $hg_{h}h^{-1}$ belongs to $\exp(\mathfrak a^{++})$. We also denote by $\xi_g^-$ the attractive fixed point of $g^{-1}$.

A key ingredient towards Theorem \ref{densityJordan} is the following extension of \cite[Cor. 7.11]{BQ}:

\begin{Prop}\label{asymptoticcrossratio}
Let $g$ and $h$ be loxodromic elements in $G$ such that $(\xi_g^+,\xi_g^-,\xi_h^+,\xi_h^-)\in\mathcal W$. Then, for sufficiently large $m$ and $n$, the element $g^mh^n$ is loxodromic and we have the convergence in $Z/[Z,Z]$:
$$\lambda(g^m h^n)\lambda(g)^{-m}\lambda(h)^{-n}\td{m,n}{\infty}[\xi_g^+,\xi_g^-,\xi_h^+,\xi_h^-].$$
\end{Prop}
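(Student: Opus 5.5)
Since the cross ratio is $G$-invariant and $\lambda$ is invariant under conjugation, we may conjugate $g$ and $h$ simultaneously by an element of $G$. We assume $\xi_g^+=\xi_0$ and $\xi_g^-=\eta_0$. Then $g$ lies in $P\cap P^-=Z$, and the image of $g$ in $Z/[Z,Z]$ is exactly $\lambda(g)$. Its hyperbolic part lies in $\exp(\mathfrak a^{++})$, so $g$ contracts $U$ and expands $U^-$ under conjugation. Choose $k\in G$ with $\xi_h^+=k\xi_0$ and $\xi_h^-=k\eta_0$. Then $h=kzk^{-1}$ with $z\in Z$, and the image of $z$ is $\lambda(h)$. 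The general position hypothesis on $(\xi_0,\eta_0,\xi_h^+,\xi_h^-)$ means exactly that $k$ and $k^{-1}$ lie in $U^-ZU$. By definition, $[\xi_g^+,\xi_g^-,\xi_h^+,\xi_h^-]$ is the image of $\delta(k)\delta(k^{-1})$.

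The core computation is
$$g^mh^n=g^mkz^nk^{-1}.$$
Write $k=u^-\ell u$ and $k^{-1}=v^-\ell' v$ with $u^-,v^-\in U^-$, $u,v\in U$, and $\ell=\delta(k)$, $\ell'=\delta(k^{-1})$. Conjugating by $g^{m}$ gives
$$g^mh^n=g^mu^-g^{-m}\cdot g^m\ell\, u z^n v^-\ell' v.$$
Here $z^nuz^{-n}$ has to be handled: $z$ contracts $U$ and expands $U^-$, so $z^n u z^{-n}\to e$ and $z^{-n}v^-z^n\to e$. Thus
$$u z^n v^- = (z^{n}\, z^{-n}uz^n)\cdots$$
This needs care. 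The cleaner route is to conjugate $g^mh^n$ by $u^-$ (or rather by $g^m u^- g^{-m}$-type elements) to place it in the form $x_{m,n}\,g^m\ell z^n\ell'\,y_{m,n}$. Here the flanking factors $x_{m,n}\in U^-$ and $y_{m,n}\in U$ should converge to $e$, using the fact that $g^{-m}(\cdot)g^m$ contracts $U^-$ and $z^n(\cdot)z^{-n}$ contracts $U$. Concretely, I would use the identity
$$z^n k^{-1} g^m k = z^n v^-\ell' v\,g^m u^-\ell u.$$
In this product, $v g^m u^- = g^m (g^{-m}vg^m)u^-$. Since $g^{-m}vg^m$ is large, this is not directly useful. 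So I would instead work with the conjugate
$$k^{-1}g^mh^nk=k^{-1}g^mk\,z^n.$$
Then I would use $k^{-1}g^mk = (v^-\ell'v)g^m(u^-\ell u)$ and rewrite it as
$$(v^-\ell')(v g^m v^{-1})(v u^-)\ell u.$$
The point is that $vg^mv^{-1}=g^m\cdot(g^{-m}vg^m)v^{-1}$, and the factor $g^{-m}vg^m$ is large. To resolve this, I choose the correct ordering so that $U$-elements appear on the side where conjugation by $g^m$ contracts them. Writing $g^m=a_m$, I expect an expression
$$g^m h^n = \alpha\,\bigl(g^m\,\delta(k)\,z^n\,\delta(k^{-1})\bigr)\,\beta\,\epsilon_{m,n}\,\alpha^{-1},$$
with $\epsilon_{m,n}\to e$ in $G$.

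The key step is the following perturbation lemma. Suppose $a\in Z$ has hyperbolic part deep in $\exp(\mathfrak a^{++})$, meaning all root values $\alpha(\lambda_{\mathbb R}(a))$ tend to infinity. Then for $x$ in a fixed neighborhood of $e$, the element $ax$ (and likewise $xa$ for $x\in U^-ZU$ near $e$) is loxodromic. Moreover, its attracting and repelling flags stay close to $\xi_0,\eta_0$, and $ax$ is conjugate, by an element close to $e$, to an element $a'\in Z$ whose image in $Z/[Z,Z]$ is close to that of $a$ times $\delta(x)$. I would prove this by a contraction argument on $\mathcal P_{\mathbb R}$. The element $ax$ maps a small neighborhood of $\xi_0$ in $U^-\xi_0$ into itself strictly, and similarly $(ax)^{-1}$ near $\eta_0$. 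This yields fixed flags $\xi',\eta'$ near $\xi_0,\eta_0$ in general position. Conjugating by $w\in U^-U$ near $e$ with $w\xi_0=\xi'$ and $w\eta_0=\eta'$ brings $ax$ into $Z$. Equivariance \eqref{Bruhatequivariance} then shows that this element equals $\delta(w^{-1}axw)$, whose class is close to $[a\delta(x)]$ because $\delta$ is continuous and the $U^\pm$-parts get absorbed. Applying this with $a=g^m\delta(k)z^n\delta(k^{-1})$ gives $\lambda(g^mh^n)\approx\lambda(g)^m\lambda(h)^n[\delta(k)\delta(k^{-1})]$, with an error tending to $e$, which is the claim.

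The main obstacle I expect is the algebraic bookkeeping: arranging $g^mkz^nk^{-1}$ so that every unipotent factor is conjugated in the contracting direction. The resolution is to use both Bruhat decompositions, $k\in U^-ZU$ for the factor next to $g^m$ and $k^{-1}\in U^-ZU$ for the factor next to $z^n$, after cyclically conjugating by the $U^-$-part of $k$. The uniform-in-$(m,n)$ perturbation lemma requires that the contraction rates grow with $\min(m,n)$; this follows from $\lambda_{\mathbb R}(g),\lambda_{\mathbb R}(h)\in\mathfrak a^{++}$. Since everything is taken modulo $[Z,Z]$ in the abelian group $Z/[Z,Z]$, the order of the factors $\delta(k)$ and $\delta(k^{-1})$ relative to $g^m$ and $z^n$ does not matter.
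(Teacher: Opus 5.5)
\textbf{There is a genuine gap at the step you flag yourself.} You never produce the decomposition of $g^mh^n$. The rearrangements you discard fail only because every contraction direction in your sketch is reversed.

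Since $g\in Z$ has attracting flag $\xi_0$, the element $g_h\in\exp(\mathfrak a^{++})$ acts on the Lie algebra of $U$ with eigenvalues of modulus $>1$. Hence $g^mu^-g^{-m}\to e$ and $g^{-m}ug^m\to e$ for $u^-\in U^-$, $u\in U$, and the same holds for $z$. You assert the opposite: that $z^nuz^{-n}\to e$, that $z^{-n}v^-z^n\to e$, that $g^{-m}(\cdot)g^m$ contracts $U^-$, and that $g^{-m}vg^m$ is large. With the correct directions the bookkeeping takes two lines. One has $g^mk=\epsilon_m\,g^m\ell u$ with $\epsilon_m=g^mu^-g^{-m}\to e$, and $\ell uz^n=\ell z^n\epsilon'_n$ with $\epsilon'_n=z^{-n}uz^n\to e$. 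Therefore
\[\epsilon_m^{-1}\,g^mh^n\,\epsilon_m=(g^m\ell z^n)\,x_{m,n},\qquad x_{m,n}=\epsilon'_n\,k^{-1}\epsilon_m\;\longrightarrow\;k^{-1}.\]
Note that $x_{m,n}$ tends to $k^{-1}$, not to $e$. Accordingly, the repelling flag of this element is close to $x_{m,n}^{-1}\eta_0\to k\eta_0=\xi_h^-$, not to $\eta_0$. So your perturbation lemma, stated for $x$ near $e$ with both fixed flags near $(\xi_0,\eta_0)$, does not apply as stated. You need it for $x$ in a compact subset of $U^-ZU$. Alternatively, you need one more rearrangement that pushes the $U^-$-part of $x_{m,n}$ leftward through $g^m\ell z^n$ and its $U$-part rightward.

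The identification of the class in your lemma is also only asserted. Continuity of $\delta$ is not enough here, because $a$ is unbounded and $\delta$ is not left $U$-invariant. The identification can be made exact, as follows.
\begin{itemize}
\item Suppose $y=ax$ is loxodromic with attracting flag $w^-\xi_0$, where $w^-\in U^-$.
\item Choose $w=w^-\ell_ww^+$, with $\ell_w\in Z$ and $w^+\in U$, mapping $(\xi_0,\eta_0)$ to the attracting and repelling flags of $y$.
\item Set $a'=w^{-1}yw\in Z$, so that $\lambda(y)=[a']$.
\item Then $(w^-)^{-1}yw^-=\ell_ww^+a'(w^+)^{-1}\ell_w^{-1}\in ZU$.
\item Applying $\delta$ and \eqref{Bruhatequivariance} gives $\ell_wa'\ell_w^{-1}=\delta(axw^-)=a\,\delta(xw^-)$, that is, $\lambda(y)=[a]\,[\delta(xw^-)]$ exactly.
\end{itemize}
Take $a=g^m\ell z^n$ and $x=x_{m,n}$. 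Loxodromy of $y$ and the convergence $w^-\to e$ must come from your contraction argument on $U^-\xi_0$, with a contraction constant $<1$ so that the fixed flag is genuinely attracting. Then $\lambda(g^mh^n)$ is asymptotic to $\lambda(g)^m\lambda(h)^n[\delta(k)\delta(k^{-1})]$, which is the claim.

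Once these two pieces are supplied, your intrinsic route is a valid alternative to the paper's. The paper avoids all of this bookkeeping by passing to ${\bf P}$-proximal representations:
\begin{itemize}
\item Lemma \ref{asymptoticcrossratio1} is a rank-one computation.
\item Lemmas \ref{representcrossratio} and \ref{checkproximal} translate cross ratios, loxodromy and $\lambda$ into projective cross ratios, proximality and top eigenvalues.
\item Corollary \ref{Levicharacter2} ensures that the highest weights span ${\rm X}({\bf Z})$, so they detect convergence in $Z/[Z,Z]$.
\end{itemize}
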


The proof of this result will occupy the next three sections.

\section{Product of proximal endomorphisms}
\label{secproxprod}

Here we slightly adapt the language of \cite[Sect. 7.3]{BQ} in order to analyze not only the spectral radius of a proximal endomorphism, but all of the leading eigenvalue, that is, we take into account the complex argument of this eigenvalue. This will yield Lemma \ref{asymptoticcrossratio1} below, which is a first step towards Proposition \ref{asymptoticcrossratio}. 

Let $V$ be a finite-dimensional complex vector space and let $g$ be a linear endomorphism of $V$. We say that $g$ is proximal if $g$ admits a unique eigenvalue with maximal modulus and the multiplicity of this eigenvalue $\lambda_1(g)$ is one: thus, the generalized eigenspace associated with $\lambda_1(g)$ has dimension $1$ (and is therefore just the eigenspace associated with $\lambda_1(g)$). We denote this line by $V_g^+$ and we write $V_g^<$ for the unique $g$-invariant complementary hyperplane.
By Rouch\'e's formula, the set of proximal endomorphisms is an open subset ${\rm Prox}(V)$ of ${\rm End}(V)$ and the function $\lambda_1$ is holomorphic on 
${\rm Prox}(V)$. In case $V$ has dimension $1$, we also ask that $g\neq 0$ for $g$ to be proximal. 

Let $X$ be in $\mathbb P(V)$ and $Y$ be in $\mathbb P(V^*)$ (where $V^*$ is the dual space of $V$; we will also identify $\mathbb P(V^*)$ with the space of hyperplanes of $V$). Then, $X$ and $Y$ are in general position in the sense of Subsection \ref{flagcrossratios} if and only if they are not orthogonal, that is, if for $v\neq 0$ in $X$ and $\varphi\neq 0$ in $Y$, one has $\varphi(v)\neq 0$.
If $X,X' \in \mathbb P(V)$ and $Y,Y' \in \mathbb P(V^*)$ are such that $(X,Y)$, $(X',Y)$, $(X,Y')$ and $(X',Y')$ are in general position, 
we will say that the quadruple $(X,Y,X',Y')$ is in general position and we will write 
$$[X,Y,X',Y']=\frac{\varphi(v')\varphi'(v)}{\varphi(v)\varphi'(v')},$$ 
where $v\neq 0$ is in $X$, $v'\neq 0$ is in $X'$, $\varphi\neq 0$ is in $Y$ and $\varphi'\neq 0$ is in $Y'$. This is actually a cross ratio in the sense of Subsection 
\ref{flagcrossratios}.

The following lemma is more or less the same as \cite[Lemma 7.5]{BQ}:

\begin{Lem} \label{asymptoticcrossratio1}
Let $g,h$ be proximal endomorphisms of $V$ and assume that the quadruple $(V^+_g,V_g^<,V_h^+,V_h^<)$ is in general position. 
Then, for sufficiently large $m$ and $n$, $g^mh^n$ is proximal and we have
$$\lambda_1(g^mh^n)\lambda_1(g)^{-m}\lambda_1(h)^{-n}\td{m,n}{\infty}[V^+_g,V_g^<,V_h^+,V_h^<].$$
\end{Lem}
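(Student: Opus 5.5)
We normalize and reduce to the rank-one limit. Fix nonzero $v_g\in V_g^+$, $v_h\in V_h^+$, and nonzero linear forms $\varphi_g,\varphi_h$ with kernels $V_g^<,V_h^<$, normalized so that $\varphi_g(v_g)=\varphi_h(v_h)=1$. Let $\pi_g=v_g\otimes\varphi_g$ be the projection onto $V_g^+$ along $V_g^<$, and define $\pi_h$ in the same way. Then
$$g^m\lambda_1(g)^{-m}=\pi_g+r_m,\qquad h^n\lambda_1(h)^{-n}=\pi_h+s_n,$$
where $r_m=(g\lambda_1(g)^{-1})^m(1-\pi_g)$. The restriction of $g\lambda_1(g)^{-1}$ to $V_g^<$ has spectral radius $<1$, so $r_m\to0$; similarly $s_n\to0$. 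Hence
$$u_{m,n}:=\lambda_1(g)^{-m}\lambda_1(h)^{-n}g^mh^n\td{m,n}{\infty}\pi_g\pi_h=\varphi_g(v_h)\,v_g\otimes\varphi_h .$$

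The limit $p=\pi_g\pi_h$ is a rank-one endomorphism with image $V_g^+$ and kernel $V_h^<$. Its only nonzero eigenvalue is its trace $\varphi_h(v_g)\varphi_g(v_h)$, with eigenvector $v_g$. This trace is nonzero because $(V_g^+,V_h^<)$ and $(V_h^+,V_g^<)$ are in general position. So $p$ is proximal in the sense of the paper: it has a simple nonzero eigenvalue, and all its other eigenvalues are $0$.

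Next we use that ${\rm Prox}(V)$ is open and $\lambda_1$ is continuous (indeed holomorphic) on it. For $m,n$ large, $u_{m,n}$ is proximal and $\lambda_1(u_{m,n})\to\lambda_1(p)$. Since $g^mh^n$ is a nonzero scalar multiple of $u_{m,n}$, it is proximal as well, and $\lambda_1(g^mh^n)=\lambda_1(g)^m\lambda_1(h)^n\lambda_1(u_{m,n})$. Therefore
$$\lambda_1(g^mh^n)\lambda_1(g)^{-m}\lambda_1(h)^{-n}\td{m,n}{\infty}\varphi_h(v_g)\varphi_g(v_h).$$

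Finally we identify this limit with the cross ratio. By definition, with $v=v_g$, $\varphi=\varphi_g$, $v'=v_h$ and $\varphi'=\varphi_h$,
$$[V_g^+,V_g^<,V_h^+,V_h^<]=\frac{\varphi_g(v_h)\varphi_h(v_g)}{\varphi_g(v_g)\varphi_h(v_h)}=\varphi_g(v_h)\varphi_h(v_g),$$
which matches the limit found above. Neither $m$ nor $n$ needs to dominate the other, since $r_m$ and $s_n$ tend to $0$ independently.

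The only delicate point is the continuity of $\lambda_1$ at the limit $p$, and in particular that nearby operators stay proximal even though $p$ is not invertible. This follows from the Rouché argument the paper invokes: the eigenvalue $\lambda_1(p)\neq0$ is simple and strictly dominates the eigenvalue $0$, which has multiplicity $\dim V-1$.
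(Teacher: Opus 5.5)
Your proof is correct and follows the same route as the paper. Both normalize $g^m$ and $h^n$ so that they converge to the rank-one projections $\pi_g,\pi_h$, check that $\pi_g\pi_h$ is proximal with leading eigenvalue equal to the cross ratio, and conclude by openness of ${\rm Prox}(V)$ and continuity of $\lambda_1$. You simply spell out the ``direct computation'' and the continuity step that the paper leaves implicit.
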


\begin{proof} If $g$ and $h$ are rank-one endomorphisms of $V$, a direct computation shows that $gh$ is proximal and 
$$\lambda_1(gh)=\lambda_1(g)\lambda_1(h)[V^+_g,V_g^<,V_h^+,V_h^<].$$
This implies the general case since, as $m,n\rightarrow \infty$, the endomorphisms $\lambda_1(g)^{-m}g^m$ and $\lambda_1(h)^{-n}h^n$ both converge to rank-one  endomorphisms.
\end{proof}

\section{Proximal representations}
\label{proxrep}

Following the strategy of \cite[Chap. 7]{BQ}, we will deduce Proposition \ref{asymptoticcrossratio} 
 from applying Lemma \ref{asymptoticcrossratio1} in sufficiently many representations of ${\bf G}$. The difficulty here is that we have to choose these representations more carefully. In the present section, we discuss why these choices are possible.

We recall some facts from representation theory (see \cite[Chap. 31]{Humgp}). 
If ${\bf H }$ is an algebraic group, we denote its group of characters by ${\rm X}({\bf H})$. To make computations easier, we write the group law of ${\rm X}({\bf H})$ additively.

Let ${\bf T}$ be a maximal torus of ${\bf G}$. We let $\Sigma_a\subset {\rm X}({\bf T})$ be the set of absolute roots, that is, the set of weights of the adjoint action of ${\bf T})$ on the Lie algebra of ${\bf G}$, and we choose a positive system $\Sigma_a^+\subset\Sigma_a$. We write $\Pi_a\subset \Sigma_a^+$ for the associated basis. Set $W$ to be the (absolute) Weyl group, that is, the finite group of automorphisms of ${\rm X}({\bf T})$ obtained by the action of the normalizer of ${\bf T}$ in ${\bf G}$. 

We equip the group ${\rm X}({\bf T})$ with the partial order associated with the subsemigroup $\bigoplus_{\alpha\in\Pi_a}\mathbb N\alpha$. 
In other words, for $\chi,\chi'$ in ${\rm X}({\bf T})$, we write $\chi\geq \chi'$ if $\chi-\chi'$ is a sum of elements of $\Pi_a$.
A character $\chi$ in ${\rm X}({\bf T})$ is said to be dominant if, for any $w$ in $W$, one has $\chi\geq w\chi$. Any $W$-orbit contains a unique dominant character. 

Let $\rho:{\bf G}\rightarrow {\rm GL}(V)$ be an irreducible rational representation, where $V$ is a finite-dimensional complex vector space. Then, the space $V^{\bf N}$ of ${\bf N}$-invariant vectors in $V$ is a line. This line is a weight space for ${\bf T}$, for a character $\chi$ which is dominant. 
If $\chi'$ is another weight of ${\bf T}$ in $V$, then $\chi-\chi'$ is a sum of elements of $\Pi_a$. The character $\chi$ is called the highest weight of the representation and for any dominant weight $\chi$ of ${\bf T}$, there exists a unique irreducible representation of ${\bf G}$ with highest weight $\chi$ (up to isomorphism). 

Let ${\bf B}\subset{\bf G}$ be the Borel subgroup containing ${\bf T}$ associated with the choice of the positive system $\Sigma_a^+$: the Lie algebra of the unipotent radical ${\bf N}$ of ${\bf B}$ is the sum of the root spaces associated with the elements of $\Sigma_a^+$.

Let $\rho:{\bf G}\rightarrow {\rm GL}(V)$ be an irreducible rational representation, where $V$ is a finite-dimensional complex vector space. Then, the space $V^{\bf N}$ of ${\bf N}$-invariant vectors in $V$ is a line. This line is a weight space for ${\bf T}$, for a character $\chi$ which is dominant. 
If $\chi'$ is another weight of ${\bf T}$ in $V$, then $\chi\geq \chi'$. The character $\chi$ is called the highest weight of the representation and for any dominant weight $\chi$ of ${\bf T}$, there exists a unique irreducible representation of ${\bf G}$ with highest weight $\chi$ (up to isomorphism). 

Now, we choose a parabolic subgroup ${\bf Q}\supset{\bf B}$ and we let ${\bf V}$ be the unipotent radical of ${\bf Q}$ and ${\bf L}$ be its Levi subgroup containing ${\bf T}$, so that ${\bf Q}={\bf L}{\bf V}$. The choice of ${\bf Q}$ is completely determined by the choice of a subset $\Theta\subset\Pi_a$: the Lie algebra of ${\bf L}$ is the sum of the Lie algebra of ${\bf T}$ and of the root spaces associated with roots in the set $\langle\Theta\rangle$ of roots in the space spanned by $\Theta$; the Lie algebra of ${\bf V}$ is the sum of the root spaces associated with the roots in $\Sigma_a^+\setminus\langle\Theta\rangle$. 

As ${\bf T}$ is a maximal torus of ${\bf L}$, the restriction morphism
${\rm X}({\bf L})\rightarrow{\rm X}({\bf T})$ is injective and we may consider ${\rm X}({\bf L})={\rm X}({\bf Q})$ as a subgroup of ${\rm X}({\bf T})$. 

Suppose now ${\bf G}$ is semisimple. Then $\Pi_a$ spans a finite index subgroup of ${\rm X}({\bf T})$.
Set ${\rm X}_{\mathbb R}({\bf T})=\mathbb R\otimes_{\mathbb Z}{\rm X}({\bf T})$. We equip ${\rm X}_{\mathbb R}({\bf T})$ with a $W$-invariant scalar product $(.,.)$. For $\alpha$ in $\Pi_a$, we let $\chi_\alpha$ be the associated fundamental weight, that is, the unique element of the subspace of ${\rm X}_{\mathbb R}({\bf T})$ such that $2(\chi_\alpha,\alpha)=(\alpha,\alpha)$ and, for $\beta\neq \alpha$ in $\Pi_a$, $(\chi_\alpha,\beta)=0$. 
By \cite[Sect. 31.1]{Humgp}, we have 
${\rm X}({\bf T})\subset \bigoplus_{\alpha\in\Pi_a}\mathbb Z\chi_{\alpha}$.
A character $\chi$ of ${\bf T}$ is dominant if it belongs to 
$\bigoplus_{\alpha\in\Pi_a}\mathbb N\chi_{\alpha}$. 

The following description of the character group of ${\bf L}$ should be standard. 

\begin{Prop} \label{Levicharacter1}
Assume ${\bf G}$ is semisimple. The character group of ${\bf L}$ may be written as follows:
\begin{equation}\label{chargroupformula}
{\rm X}({\bf L})={\rm X}({\bf T})\cap \left(\bigoplus_{\alpha\in\Pi_a\setminus\Theta}\mathbb Z\chi_{\alpha}\right).\end{equation}
\end{Prop}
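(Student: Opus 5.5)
We prove the two inclusions separately. Throughout, a character of ${\bf T}$ is identified with its image in ${\rm X}_{\mathbb R}({\bf T})$. Since ${\bf G}$ is semisimple, $\Pi_a$ is a basis of this space and the fundamental weights $\chi_\alpha$ form the dual basis. So for $\chi$ in ${\rm X}_{\mathbb R}({\bf T})$ we may write $\chi=\sum_{\alpha\in\Pi_a}c_\alpha\chi_\alpha$ with
$$c_\alpha=2(\chi,\alpha)/(\alpha,\alpha).$$
The right-hand side of \eqref{chargroupformula} is therefore the set of characters $\chi$ of ${\bf T}$ with $(\chi,\alpha)=0$ for every $\alpha\in\Theta$.

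\emph{Inclusion $\subset$.} Let $\chi$ be a character of ${\bf L}$, restricted to ${\bf T}$. For each $\alpha\in\Theta$, the root subgroup ${\bf U}_\alpha\subset{\bf L}$ is unipotent, so $\chi$ is trivial on it. Hence $\chi$ is trivial on the subgroup generated by ${\bf U}_{\alpha}$ and ${\bf U}_{-\alpha}$, which contains the coroot image $\alpha^\vee(\mathbb G_m)$. Therefore $\langle\chi,\alpha^\vee\rangle=0$, that is $(\chi,\alpha)=0$. By the remark above, $\chi$ lies in $\bigoplus_{\alpha\in\Pi_a\setminus\Theta}\mathbb Z\chi_\alpha$. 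Equivalently, $\chi$ is $W_\Theta$-invariant, since it is invariant under conjugation by the normalizer of ${\bf T}$ in ${\bf L}$.

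\emph{Inclusion $\supset$.} This is the main step. Let $\chi$ be a character of ${\bf T}$ with $(\chi,\alpha)=0$ for all $\alpha\in\Theta$. We must show that $\chi$ extends to a character of ${\bf L}$.

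First, ${\bf L}={\bf T}\cdot[{\bf L},{\bf L}]$, and $[{\bf L},{\bf L}]$ is generated by the root subgroups ${\bf U}_{\pm\alpha}$ for $\alpha\in\langle\Theta\rangle$. Moreover, ${\bf T}\cap[{\bf L},{\bf L}]$ is the maximal torus ${\bf T}'$ of the semisimple group $[{\bf L},{\bf L}]$. This torus ${\bf T}'$ is generated by the coroot images $\alpha^\vee(\mathbb G_m)$, $\alpha\in\Theta$, because a maximal torus of a semisimple group is generated by its coroots. Since $\chi$ is trivial on each of these coroot images, $\chi$ is trivial on ${\bf T}'$.

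We then define $\tilde\chi(t s)=\chi(t)$ for $t\in{\bf T}$ and $s\in[{\bf L},{\bf L}]$. This is well defined because $\chi$ is trivial on ${\bf T}\cap[{\bf L},{\bf L}]={\bf T}'$. It is a homomorphism because ${\bf T}$ normalizes $[{\bf L},{\bf L}]$ and ${\bf T}$ is commutative. Equivalently, $\tilde\chi$ is a character of the torus ${\bf L}/[{\bf L},{\bf L}]\simeq{\bf T}/{\bf T}'$, pulled back to ${\bf L}$.

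Regularity follows because the quotient map ${\bf L}\to{\bf L}/[{\bf L},{\bf L}]$ is a morphism of algebraic groups. The character $\chi$, being trivial on ${\bf T}'$, factors through the torus ${\bf T}/{\bf T}'$, and the natural map ${\bf T}/{\bf T}'\to{\bf L}/[{\bf L},{\bf L}]$ is an isomorphism of algebraic tori. Its bijectivity is clear, and since we work in characteristic zero the map is separable, so it is an isomorphism of varieties.

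The delicate point is the equality ${\bf T}\cap[{\bf L},{\bf L}]={\bf T}'$, together with the fact that ${\bf T}'$ is generated by the coroots of $\Theta$. Once this is granted, the identification of ${\rm X}({\bf L})$ with the right-hand side of \eqref{chargroupformula} is immediate. The intersection ${\bf T}\cap[{\bf L},{\bf L}]$ is a diagonalizable subgroup of $[{\bf L},{\bf L}]$ centralizing ${\bf T}'$, hence contained in the centralizer of ${\bf T}'$ in $[{\bf L},{\bf L}]$. That centralizer is ${\bf T}'$ itself, since in a connected reductive group the centralizer of a maximal torus is that torus; this gives the equality, and the generation by coroots is standard (\cite[Chap. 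IV]{Bo}).
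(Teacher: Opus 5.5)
Your proof is correct, but for the inclusion $\supset$ it takes a genuinely different route from the paper.

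\emph{Your route.} You identify ${\rm X}({\bf L})$ with ${\rm X}({\bf L}/[{\bf L},{\bf L}])$ and use the structure of reductive groups:
\begin{itemize}
\item ${\bf L}={\bf T}[{\bf L},{\bf L}]$;
\item ${\bf T}\cap[{\bf L},{\bf L}]$ is the maximal torus of $[{\bf L},{\bf L}]$ generated by the $\alpha^\vee(\mathbb G_m)$, $\alpha\in\Theta$;
\item ${\bf T}/({\bf T}\cap[{\bf L},{\bf L}])\simeq{\bf L}/[{\bf L},{\bf L}]$ in characteristic zero.
\end{itemize}
So ${\rm X}({\bf L})$ is exactly the set of characters of ${\bf T}$ killing the simple coroots of $\Theta$.

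\emph{The paper's route.} It first uses Chevalley's theorem: for each $\alpha\in\Pi_a\setminus\Theta$ there is a representation with a line stabilized by ${\bf P}_\alpha\supset{\bf Q}$. This gives a character of ${\bf L}$ restricting to $d\chi_\alpha$ with $d>0$, so ${\rm X}({\bf L})$ has finite index in the right-hand side. Then, for $\chi$ in the right-hand side, it defines $\chi$ on ${\bf L}$ as a rational function through the torus component of the big cell. Regularity comes from the regularity of $\chi^d$ and the normality of ${\bf L}$. Multiplicativity comes from the fact that $f(g_1,g_2)=\chi(g_1g_2)\chi(g_1)^{-1}\chi(g_2)^{-1}$ satisfies $f^d=1$ on the connected group ${\bf L}\times{\bf L}$.

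For $\subset$ the two arguments are essentially equivalent. The paper uses an element $n_\alpha\in N_{\bf L}({\bf T})$ acting by $\sigma_\alpha$; you use that $\chi$ kills ${\bf U}_{\pm\alpha}$ and hence $\alpha^\vee(\mathbb G_m)$.

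\emph{What each buys.} Your route is shorter and more conceptual. It works verbatim for any connected reductive group; semisimplicity of ${\bf G}$ only enters when you write the answer in terms of fundamental weights. It also avoids both the finite-index step and the root-extraction trick. The cost is reliance on finer structure theory: that ${\bf T}\cap[{\bf L},{\bf L}]$ is connected and equals the coroot torus, and that bijective homomorphisms of algebraic groups are isomorphisms in characteristic zero. The paper's route stays within the highest-weight and Chevalley toolkit that it reuses in Corollary \ref{Levicharacter2} and Lemma \ref{checkproximal}.

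A small point of presentation, to avoid any appearance of circularity. Define ${\bf T}'$ first as the torus generated by the $\alpha^\vee(\mathbb G_m)$, $\alpha\in\Theta$. Observe that it lies in ${\bf T}\cap[{\bf L},{\bf L}]$ and has dimension $|\Theta|$, the rank of $[{\bf L},{\bf L}]$, so it is a maximal torus of $[{\bf L},{\bf L}]$. Only then run your centralizer argument to conclude ${\bf T}\cap[{\bf L},{\bf L}]={\bf T}'$.
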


\begin{proof}
Let $\chi$ be in ${\rm X}({\bf L})$. For $\alpha\in \Theta$, as the associated weight space is contained in the Lie algebra of ${\bf L}$, by \cite[Sect. 13.18]{Bo} or \cite[Sect. 24.1]{Humgp}, there exists an element $n_\alpha$ of the normalizer of ${\bf T}$ in ${\bf L}$ which induces on ${\rm X}_{\mathbb R}({\bf T})$ the reflection $\sigma_\alpha$ associated with $\alpha$. Hence, for $t$ in ${\bf T}$, we get 
$$\sigma_\alpha(\chi)(t)=\chi(n_\alpha tn_\alpha^{-1})=\chi(n_\alpha)\chi(t)\chi(n_\alpha)^{-1}=\chi(t).$$
Thus, $\sigma_\alpha(\chi)=\chi$ which is to say that $(\alpha,\chi)=0$ and hence that $\chi$ belongs to $\bigoplus_{\alpha\in\Pi_a\setminus\Theta}\mathbb Z\chi_{\alpha}$. Thus, we have proved that the left-hand side of \eqref{chargroupformula} is contained in the right-hand side.

Conversely, we recall the argument which may essentially be found in \cite[Sect. 31.4]{Humgp}. 

First, for $\alpha$ in $\Pi_a\smallsetminus\Theta$, we claim that there exists an integer $d>0$ and a character $\chi$ of ${\bf L}$ whose restriction to ${\bf T}$ is $\chi_\alpha^d$. Indeed, by Chevalley's Theorem (see \cite[Sect. 5.1]{Bo}, \cite[Sect. 11.2]{Humgp}), there exists a complex representation $\rho:{\bf G}\rightarrow{\rm GL}(V)$ and a line $X\subset V$ whose stabilizer in ${\bf G}$ is the standard parabolic subgroup ${\bf P}_\alpha\supset {\bf B}$ associated with the subset $(\Pi_a\setminus\{\alpha\})\subset \Pi_a$. As we assumed $\alpha\notin\Theta$, we have ${\bf Q}\subset{\bf P}_\alpha$. The action of ${\bf P}_\alpha$ on $X$ is by a character $\chi$. Since $X$ is ${\bf N}$-invariant, by \cite[Sect. 31.2]{Humgp}, up to replacing $V$ by the subspace spanned by ${\bf G}X$, we may assume that $V$ is irreducible, with highest weight $\chi$.

As above, for $\beta$ in $\Pi_a\setminus\{\alpha\}$ the normalizer of ${\bf T}$ in ${\bf P}_\alpha$ contains an element $n_\beta$ which acts by $\sigma_\beta$ on ${\rm X}_{\mathbb R}({\bf T})$ and hence $(\chi,\beta)=0$. Therefore, we get $\chi=d\chi_\alpha$, for some integer $d$. Since ${\bf G}/{\bf P}_{\alpha} $ is a projective variety (not reduced to a point), we have $d\neq 0$. Since $\chi$ is dominant, we have $d>0$ (but we actually only need to know that $d\neq 0$). 

Now, we have proved that the group ${\rm X}({\bf L})\subset\bigoplus_{\alpha\in\Pi_a\setminus\Theta}\mathbb Z\chi_{\alpha}$ contains a nonzero multiple of $\chi_\alpha$, for each $\alpha$ in $\Pi_a\setminus\Theta$. Therefore it contains the group $\bigoplus_{\alpha\in\Pi_a\setminus\Theta}d\mathbb Z\chi_{\alpha}$ for some $d>0$ sufficiently large. Let $\chi$ be in ${\rm X}({\bf T})\cap \left(\bigoplus_{\alpha\in\Pi_a\setminus\Theta}\mathbb Z\chi_{\alpha}\right)$. Let ${\bf N}^-$ be the unipotent radical of the Borel subgroup of ${\bf G}$ opposite to ${\bf B}$ with respect to ${\bf T}$. For $g$ in ${\bf L}\cap {\bf N^-}{\bf T}{\bf N}$, set $\chi(g)=\chi(t)$, where $t$ is the torus component of the Bruhat decomposition of $g$. Then, $\chi$ is a rational function on ${\bf L}$ and $\chi^d$ is a regular function on ${\bf L}$. As ${\bf L}$ is smooth, it is normal, and hence $\chi$ is actually a regular function on ${\bf L}$. We claim that $\chi$ is a character of ${\bf L}$. 

As $d\chi$ is a character, for $g$ in ${\bf L}$, we have $\chi(g)^d\neq 0$, hence $\chi(g)\neq 0$. Thus, the function $f$ defined on ${\bf L}\times{\bf L}$ by 
$$f(g_1,g_2)=\chi(g_1g_2)\chi(g_1)^{-1}\chi(g_2)^{-1}$$ is regular. Since $d\chi$ is a character, we get $f^d=1$. Hence, as ${\bf L}$ is connected, $f$ is constant. As $f$ is equal to $1$ on ${\bf T}\times{\bf T}$, we get $f=1$ and $\chi$ is indeed a character. We have shown that the right-hand side of \eqref{chargroupformula} is contained in the left-hand side.
\end{proof}

We go back to the general case, where we do not assume anymore ${\bf G}$ to be semisimple but only to be reductive.
If $\rho:{\bf G}\rightarrow {\rm GL}(V)$ is an irreducible representation of ${\bf G}$ with highest weight $\chi$, let us say that $\rho$ is ${\bf Q}$-proximal if  the highest weight space $V_{\chi}=V^{\bf N}$ is actually ${\bf Q}$-invariant. Then, in particular, the highest weight $\chi$ belongs to ${\rm X}({\bf L})$.

\begin{Cor} \label{Levicharacter2}
An irreducible representation  $\rho:{\bf G}\rightarrow {\rm GL}(V)$ is ${\bf Q}$-proximal if and only if its highest weight $\chi$ belongs to ${\rm X}({\bf L})$. The set of highest weights of ${\bf Q}$-proximal representations spans the group ${\rm X}({\bf L})$.
\end{Cor}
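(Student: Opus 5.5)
The proof splits into the two assertions of the corollary. For the first, the direct implication is immediate: if $V_\chi$ is ${\bf Q}$-invariant, ${\bf Q}$ acts on this line by a character of ${\bf Q}$, which is trivial on the unipotent radical ${\bf V}$. It is therefore a character of ${\bf L}={\bf Q}/{\bf V}$ whose restriction to ${\bf T}$ is $\chi$, so $\chi\in{\rm X}({\bf L})$.

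For the converse, take $\chi\in{\rm X}({\bf L})$ as highest weight. I will show that $V_\chi$ is stable under ${\bf L}$ and under ${\bf V}$; since ${\bf Q}={\bf L}{\bf V}$, this suffices.
\begin{itemize}
\item Stability under ${\bf V}$ is automatic, because ${\bf V}\subset{\bf N}$ and $V_\chi=V^{\bf N}$.
\item For ${\bf L}$, it is enough to check that the Lie algebra of ${\bf L}$ preserves $V_\chi$, since ${\bf L}$ is connected. The Lie algebra of ${\bf T}$ preserves it, and so do positive root spaces.
\item For $\alpha\in\langle\Theta\rangle$ negative, the root space $\mathfrak g_{\alpha}$ sends $V_\chi$ into $V_{\chi+\alpha}$. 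Suppose $V_{\chi+\alpha}\neq0$. Then $\chi+\alpha$ is a weight, so its Weyl conjugate $\sigma_\alpha(\chi+\alpha)=\chi-\alpha$ (using $(\chi,\alpha)=0$, since $\chi$ is $W_{\bf L}$-invariant as in the proof of Proposition \ref{Levicharacter1}) is also a weight. But $\chi-\alpha>\chi$, which contradicts maximality of $\chi$.
\end{itemize}
Hence $V_\chi$ is ${\bf Q}$-invariant.

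The only point needing care is that $(\chi,\alpha)=0$ for $\alpha\in\Theta$ holds also in the reductive case. The argument of Proposition \ref{Levicharacter1} does not use semisimplicity here: the reflection $\sigma_\alpha$ is realized by some $n_\alpha$ in ${\bf L}$, and $\chi$, being a character of ${\bf L}$, is invariant under conjugation by it. Since the weights are permuted by $W$, this gives $\sigma_\alpha$-invariance of $\chi$ for every root $\alpha$ of ${\bf L}$.

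For the spanning statement, I reduce to showing that dominant elements of ${\rm X}({\bf L})$ span ${\rm X}({\bf L})$; the first part then realizes each of them as the highest weight of a ${\bf Q}$-proximal representation. A character $\chi\in{\rm X}({\bf L})$ pairs trivially with $\Theta$, and dominance only concerns its pairings with $\alpha\in\Pi_a\setminus\Theta$. Choose a fixed $\psi\in{\rm X}({\bf L})$ with $(\psi,\alpha)>0$ for all $\alpha\in\Pi_a\setminus\Theta$. The proof of Proposition \ref{Levicharacter1} supplies such $\psi$: the characters $d\chi_\alpha$ coming from the representations with stabilizer ${\bf P}_\alpha$ are characters of ${\bf G}$-representations restricted to ${\bf L}$, and this remains valid for reductive ${\bf G}$ via Chevalley. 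Then for $k$ large, $\chi+k\psi$ is dominant, and $\chi=(\chi+k\psi)-k\psi$ is a difference of dominant elements of ${\rm X}({\bf L})$.

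The main obstacle is this reductive case. There ${\rm X}({\bf T})$ also contains central directions orthogonal to all roots, and one must check that "dominant" means $(\chi,\alpha)\ge0$ for $\alpha\in\Pi_a$, with no constraint on the central part. With that definition the argument above goes through unchanged.
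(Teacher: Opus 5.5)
Your argument is correct, but it is organized differently from the paper's proof.

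\textbf{First statement.} The paper simply appeals to the construction of the irreducible representation with given highest weight in Humphreys. You instead give a self-contained weight argument: a negative root vector of ${\bf L}$ must kill $V_\chi$, since $\sigma_\alpha\chi=\chi$ would otherwise make $\chi-\alpha>\chi$ a weight.

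\textbf{Spanning statement.} The paper reduces to semisimple ${\bf G}$ and uses the full equality ${\rm X}({\bf L})={\rm X}({\bf T})\cap\bigoplus_{\alpha\in\Pi_a\setminus\Theta}\mathbb Z\chi_\alpha$ of Proposition \ref{Levicharacter1}. It then identifies the highest weights of ${\bf Q}$-proximal representations with the corresponding $\mathbb N$-cone. You stay in the reductive setting and need only two things: the easy inclusion (characters of ${\bf L}$ are orthogonal to $\Theta$), and a single $\psi\in{\rm X}({\bf L})$ strictly positive on $\Pi_a\setminus\Theta$, by which you shift. This bypasses both the normality argument and the reduction to ${\bf S}$. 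The price is that you do not obtain the explicit description of the set of highest weights.

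\textbf{The existence of $\psi$.} This is the one step you assert too quickly for reductive ${\bf G}$. The paper shows the Chevalley character does not vanish on $\alpha$ via projectivity of ${\bf G}/{\bf P}_\alpha$, and that argument uses semisimplicity. For reductive ${\bf G}$, the character of ${\bf P}_\alpha$ on the Chevalley line may have a nontrivial central part. Orthogonality to $\alpha$ would then not make it trivial, so you would first have to twist the Chevalley representation by a character of ${\bf G}$ (a $W$-invariant character of ${\bf T}$ extends to ${\bf G}$).

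It is simpler to take $\psi=\det({\rm Ad}|_{{\rm Lie}({\bf V})})$, the sum of the roots in $\Sigma_a^+\setminus\langle\Theta\rangle$. It is a character of ${\bf L}$ because ${\bf L}$ normalizes ${\bf V}$. Write it as $\kappa-\kappa_\Theta$, where $\kappa$ and $\kappa_\Theta$ are the sums of the positive roots of ${\bf G}$ and of ${\bf L}$. For simple $\alpha$ one has $(\kappa,\alpha)=(\alpha,\alpha)$. Also, $\kappa_\Theta$ is a nonnegative combination of elements of $\Theta$, each of which pairs nonpositively with any $\alpha\in\Pi_a\setminus\Theta$. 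Hence $(\psi,\alpha)\geq(\alpha,\alpha)>0$ for every $\alpha\in\Pi_a\setminus\Theta$.
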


\begin{proof} The first statement directly follows from the construction of the irreducible representation with a given highest weight (see \cite[Sect. 31.4]{Humgp}). The second now follows from Proposition \ref{Levicharacter1}. 

More precisely, we can assume that ${\bf G}$ is semisimple. Indeed, if ${\bf S}=[{\bf G},{\bf G}]$ is the derived group of ${\bf G}$ and ${\bf C}$ is its center, we have ${\bf G}={\bf S}{\bf C}$ and ${\bf C}\subset {\bf T}$, so that a character $\chi$ in ${\rm X}({\bf T})$ is in ${\rm X}({\bf L})$ if and only $\chi_{|{\bf S}\cap{\bf T}}$ is in ${\rm X}({\bf S}\cap{\bf L})$. 

In case ${\bf G}$ is semisimple, Proposition \ref{Levicharacter1} we have ${\rm X}({\bf L})={\rm X}({\bf T})\cap \left(\bigoplus_{\alpha\in\Pi_a\setminus\Theta}\mathbb Z\chi_{\alpha}\right)$, whereas the first statement says that the set of highest weights of ${\bf Q}$-proximal representations is ${\rm X}({\bf T})\cap \left(\bigoplus_{\alpha\in\Pi_a\setminus\Theta}\mathbb N\chi_{\alpha}\right)$. The conclusion follows.
\end{proof}

\section{Cross ratios and loxodromic elements}

We will now relate the abstract cross ratios of Subsection \ref{flagcrossratios} with the cross ratios in projective spaces of Section \ref{secproxprod}. This will allow us to deduce Proposition \ref{asymptoticcrossratio} from Lemma \ref{asymptoticcrossratio1}.

\subsection{Proximal representations and cross ratios} We begin by computing the image of a cross ratio by a dominant character.

We keep the previously introduced notation. 
In particular, $\bf Q$ and $\bf Q^-$ are opposite parabolic subgroups of the complex reductive algebraic group $\bf G$. 
Given $\rho:{\bf G}\rightarrow {\rm GL}(V)$, a ${\bf Q}$-proximal irreducible representation of ${\bf G}$, the highest weight space $V^{\bf N}$ is ${\bf Q}$-invariant. The orbit map $g\mapsto gV^{\bf N}$ induces a morphism of varieties $\mathcal Q={\bf G}/{\bf Q}\rightarrow\mathbb P(V)$. By abuse of notation, we still denote this map by $\rho$. In the same way, considering the contragredient representation $\rho^\vee:{\bf G}\rightarrow {\rm GL}(V^*)$, we get a map $\rho^\vee:\mathcal Q^- \rightarrow \mathbb P(V^*)$, where $\mathcal Q^- = \bf G / Q^-$. 

\begin{Lem}\label{representcrossratio}
Let $\rho:{\bf G}\rightarrow {\rm GL}(V)$ be a ${\bf Q}$-proximal irreducible representation of ${\bf G}$ with highest weight $\chi$. 
Then, for $(\xi,\eta,\xi',\eta')$ in $\mathcal W$, we have
\begin{equation}\label{crossratiorepformula}\chi([\xi,\eta,\xi',\eta'])=[\rho(\xi),\rho^\vee(\eta),\rho(\xi'),\rho^\vee(\eta')].\end{equation}
\end{Lem}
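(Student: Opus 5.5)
Both sides of \eqref{crossratiorepformula} are ${\bf G}$-invariant: the left-hand side by construction of the cross ratio, and the right-hand side because $\rho$ and $\rho^\vee$ are ${\bf G}$-equivariant and the projective cross ratio of Section \ref{secproxprod} is ${\rm GL}(V)$-invariant. It therefore suffices to treat quadruples of the form $(\xi_0,\eta_0,g\xi_0,g\eta_0)$ with $g\in{\bf U}^-{\bf L}{\bf U}$ and $g^{-1}\in{\bf U}^-{\bf L}{\bf U}$. By definition, the left-hand side is then $\chi(\delta(g))\chi(\delta(g^{-1}))$. Here $\chi$ is viewed as a character of ${\bf L}$ by Corollary \ref{Levicharacter2}, so it factors through ${\bf L}/[{\bf L},{\bf L}]$.

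Next I fix vectors. Let $v_0$ span $V^{\bf N}=\rho(\xi_0)$; this line is ${\bf Q}$-stable, with ${\bf L}$ acting on it by $\chi$ and ${\bf U}$ acting trivially. On the dual side I take $\varphi_0\in V^*$ to be the lowest-weight covector dual to $v_0$. This means $\varphi_0(v_0)=1$ and $\varphi_0$ vanishes on all other ${\bf T}$-weight spaces. The key claim is that $\varphi_0$ spans $\rho^\vee(\eta_0)$, i.e.\ that it is ${\bf Q}^-$-eigen. The argument is that its kernel, the sum of the weight spaces of weight different from $\chi$, is ${\bf Q}^-$-invariant. Indeed, ${\bf U}^-$ lowers weights, so it preserves that sum. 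As for ${\bf L}$: since $\chi\in{\rm X}({\bf L})$ is orthogonal to every root of $\langle\Theta\rangle$, the $\chi$-weight space is an ${\bf L}$-isotypic component, and hence its ${\bf T}$-stable complement is also ${\bf L}$-stable. Consequently ${\bf U}^-$ fixes $\varphi_0$ and ${\bf L}$ acts on it by $\chi^{-1}$ under the contragredient action.

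With these choices, $v=v_0$, $\varphi=\varphi_0$, $v'=\rho(g)v_0$ and $\varphi'=\rho^\vee(g)\varphi_0=\varphi_0\circ\rho(g)^{-1}$. The projective cross ratio becomes
$$\frac{\varphi_0(\rho(g)v_0)\,\varphi_0(\rho(g^{-1})v_0)}{\varphi_0(v_0)\,\varphi_0(v_0)}.$$
Write $g=u^-\ell u$ with $\ell=\delta(g)$. Then $\rho(g)v_0=\chi(\ell)\rho(u^-)v_0$, and $\varphi_0\circ\rho(u^-)=\varphi_0$ because ${\bf U}^-$ fixes $\varphi_0$. This gives $\varphi_0(\rho(g)v_0)=\chi(\delta(g))$. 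The same computation for $g^{-1}$ gives $\chi(\delta(g^{-1}))$. Multiplying the two factors yields \eqref{crossratiorepformula}.

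The main obstacle is the identification of $\rho^\vee(\eta_0)$ with the line spanned by $\varphi_0$, which must hold for the map $\mathcal Q^-\to\mathbb P(V^*)$ as defined via the contragredient representation. I would first check directly that $\varphi_0$ is the highest weight vector of $V^*$ for the Borel subgroup opposite to ${\bf B}$, so that $\varphi_0$ is fixed by ${\bf N}^-$. I would then check that its weight $-\chi$ lies in ${\rm X}({\bf L})$, so that the stabilizer of the line ${\mathbb C}\varphi_0$ is exactly ${\bf Q}^-$. Together these justify both the equality with $\rho^\vee(\eta_0)$ and the equivariance properties used in the computation above.
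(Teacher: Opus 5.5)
Your proof is correct and follows the paper's argument. You reduce by ${\bf G}$-invariance to $(\xi_0,\eta_0,g\xi_0,g\eta_0)$, pair the highest weight vector with the ${\bf U}^-$-invariant covector spanning $\rho^\vee(\eta_0)$, and read off $\chi(\delta(g))\chi(\delta(g^{-1}))$ from the Bruhat decompositions of $g$ and $g^{-1}$; your justification that $\varphi_0$ spans $\rho^\vee(\eta_0)$ fills in what the paper only asserts. One small overstatement: the stabilizer of $\mathbb C\varphi_0$ need not be \emph{exactly} ${\bf Q}^-$ (it can be a larger parabolic, e.g.\ all of ${\bf G}$ for the trivial representation), but you only use that ${\bf Q}^-$ stabilizes this line, and that holds.
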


\begin{proof} Since both sides of \eqref{crossratiorepformula} define ${\bf G}$-invariant functions $\mathcal W\rightarrow \mathbb C^*$, it suffices to prove the formula when $\xi$ and $\eta$ are respectively the fixed points $\xi_0$ and $\eta_0$ of ${\bf Q}$ and ${\bf Q}^-$ in $\mathcal Q$. 

Take $v\neq 0$ in $\rho(\xi_0)$ and $\varphi\neq 0$ in $\rho^\vee(\eta_0)$. Note that $v$ is $\rho({\bf V})$-invariant and $\varphi$ is $\rho^\vee({\bf V}^-)$-invariant. Also, as $\rho(\xi_0)$ is the highest weight space, for $\ell$ in ${\bf L}$, we have $\rho(\ell)v=\chi(\ell) v$.

Now, choose $g$ in ${\bf G}$ with $g\xi_0=\xi'$ and $g\eta_0=\eta'$. Write the Bruhat decomposition of $g$ as $g=u_1^-\delta(g)u_1$ and that of $g^{-1}$ as 
$g^{-1}=u_2^-\delta(g^{-1})u_2$. On one hand, by the definition in Subsection \ref{flagcrossratios}, we have
$$\chi([\xi_0,\eta_0,g\xi_0,g\eta_0])=\chi(\delta(g)\delta(g^{-1})).$$
On the other hand, by the definition in Section \ref{secproxprod}, we have
\begin{align*}[\rho(\xi_0),\rho^\vee(\eta_0),\rho(g\xi_0),\rho^\vee(g\eta_0)]
&=\frac{\varphi(\rho(g)v)(\rho^\vee(g)\varphi)(v)}{\varphi(v)(\rho^\vee(g)\varphi)(\rho(g)v)}\\
&=\frac{\varphi(\rho(g)v)\varphi(\rho(g^{-1})v)}{\varphi(v)^2}\\
&=\frac{\varphi(\rho(u_1^-\delta(g)u_1)v)\varphi(\rho(u_2^-\delta(g^{-1})u_2)v)}{\varphi(v)^2}\\
&=\frac{\varphi(\rho(\delta(g))v)\varphi(\rho(\delta(g^{-1}))v)}{\varphi(v)^2}\\
&=\chi(\delta(g))\chi(\delta(g^{-1})).
\end{align*}
The conclusion follows.
\end{proof}

\subsection{Proximal representations and loxodromic elements}
We now return to the real setting. 
In order to conclude the proof of Proposition \ref{asymptoticcrossratio}, we now relate the fact that an element $g$ of $G$ is loxodromic with the one that $\rho(g)$ is proximal for sufficiently many ${\bf P}$-proximal representations $\rho$ of ${\bf G}$. This is just a reformulation of the techniques used in \cite{Be1,Be2} (see also \cite[Chap. 8]{BQ}).

\begin{Lem}\label{checkproximal} 
Let $g$ be in $G$.
If $g$ is loxodromic, then, for any ${\bf P}$-proximal irreducible representation $\rho:{\bf G}\rightarrow {\rm GL}(V)$ with highest weight $\chi$, the endomorphism $\rho(g)$ of $V$ is proximal and $\lambda_1(\rho(g))=\chi(\lambda(g))$.

Conversely,  there exists a finite set of ${\bf P}$-proximal irreducible representations $\{\rho_1,\ldots,\rho_r\}$ such that, 
for $g$ in $G$,
if $\rho_1(g),\ldots,\rho_r(g)$ are proximal, then $g$ is loxodromic.
\end{Lem}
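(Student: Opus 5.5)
For the direct implication, I will conjugate so that $g$ lies in $Z$. If $g$ is loxodromic, pick $\gamma\in G$ with $\gamma g_h\gamma^{-1}=\exp\lambda_{\mathbb R}(g)\in\exp(\mathfrak a^{++})$. Then $\gamma g\gamma^{-1}=z\in Z$, and since $\rho$ and $\chi$ are conjugation-invariant in the relevant sense, we may assume $g=z\in Z$, with $z=a m$ where $a=\exp\lambda_{\mathbb R}(g)$ and $m$ lies in the compact part of $Z$.

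Next I decompose $V$ under a maximal torus ${\bf T}\supset{\bf A}$ of ${\bf Z}$. Since $\rho$ is ${\bf P}$-proximal, the highest weight line $V^{\bf N}$ is ${\bf P}$-invariant, and ${\bf Z}$ acts on it by the character $\chi\in{\rm X}({\bf Z})$. The eigenvalue of $\rho(z)$ on $V^{\bf N}$ is therefore $\chi(z)$, and since $\chi$ factors through ${\bf Z}/[{\bf Z},{\bf Z}]$ (using Lemma \ref{goodAbelian}), this equals $\chi(\lambda(g))$.

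The key point is to show that every other eigenvalue of $\rho(z)$ has strictly smaller modulus. Decompose $V$ into restricted weight spaces for ${\bf A}$. Any ${\bf T}$-weight $\chi'\neq\chi$ of $V$ satisfies $\chi-\chi'\in\sum_{\Pi_a}\mathbb N\alpha$. I claim that its restriction to ${\bf A}$ is not equal to $\chi|_{\bf A}$, unless $\chi'$ belongs to the weights of the ${\bf Z}$-submodule generated by $V^{\bf N}$, which is the line itself by ${\bf P}$-proximality. Granting this, $\chi|_{\mathfrak a}-\chi'|_{\mathfrak a}$ is a nonzero nonnegative combination of restricted simple roots, so it is strictly positive on $\lambda_{\mathbb R}(g)\in\mathfrak a^{++}$. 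Since $m$ contributes only modulus-one factors on each ${\bf A}$-weight space (its action commutes with $a$ and generates a relatively compact group), the modulus of any eigenvalue of $\rho(z)$ on the $\chi'|_{\bf A}$-weight space equals $e^{\chi'(\lambda_{\mathbb R}(g))}<e^{\chi(\lambda_{\mathbb R}(g))}$. The top ${\bf A}$-weight space, $V^{{\bf N}_{\mathbb R}}$ where ${\bf N}_{\mathbb R}$ is the unipotent radical of ${\bf P}$, is exactly $V^{\bf N}$, because it is a ${\bf Z}$-module on which the unipotent radical acts trivially, hence it generates an irreducible submodule with highest-weight line only $V^{\bf N}$. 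Proving this identification cleanly, namely that the $\chi|_{\bf A}$-weight space is one-dimensional precisely when $V^{\bf N}$ is ${\bf P}$-invariant, is the main obstacle. I would handle it by noting that this weight space is the space of ${\bf U}$-invariants for the unipotent radical ${\bf U}$ of ${\bf P}$, which is a ${\bf Z}$-irreducible module containing $V^{\bf N}$ (standard, as in \cite{Be1}). ${\bf P}$-proximality forces it to equal $V^{\bf N}$, since the ${\bf Z}$-module generated by a ${\bf P}$-stable line is that line. This yields proximality of $\rho(g)$ and the formula $\lambda_1(\rho(g))=\chi(\lambda(g))$.

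For the converse, I will take, for each restricted simple root $\alpha$ of $(\mathfrak a,{\bf P})$, a ${\bf P}$-proximal representation $\rho_\alpha$ whose highest weight restricted to $\mathfrak a$ is a positive multiple of the restricted fundamental weight $\omega_\alpha$. These exist by Corollary \ref{Levicharacter2}: choose $\chi$ in ${\rm X}({\bf Z})$ dominant, equal to a sum of $\chi_\beta$ over absolute simple roots $\beta$ restricting to $\alpha$ (after Galois averaging, a multiple of such a sum lies in ${\rm X}({\bf Z})$). For such $\rho_\alpha$, the second-largest modulus of an eigenvalue of $\rho_\alpha(g)$ is $e^{(\chi-\alpha)(\lambda_{\mathbb R}(g))}$, since $\chi-\alpha$ is a weight and every other weight lies below it. Hence proximality of $\rho_\alpha(g)$ for all $\alpha$ forces $\alpha(\lambda_{\mathbb R}(g))>0$ for all $\alpha$, that is, $\lambda_{\mathbb R}(g)\in\mathfrak a^{++}$, so $g$ is loxodromic. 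The moduli of the eigenvalues of $\rho(g)$ are read off from $g_h$, which is conjugate to $\exp\lambda_{\mathbb R}(g)$.
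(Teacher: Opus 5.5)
Your proof is correct. The converse is essentially the paper's argument, but the direct implication proves its key step by a different mechanism.

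Both proofs reduce the direct part to one claim: every ${\bf T}$-weight $\chi'\neq\chi$ of $V$ has restriction to ${\bf A}$ strictly below $\chi_{|{\bf A}}$.

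\begin{itemize}
\item The paper proves this by weight combinatorics. Since $V_\chi$ is ${\bf P}$-stable and ${\bf P}$ contains the root groups of $-\Pi_i$, the corresponding lowering operators kill the highest weight vector. Hence every other weight has the form $\chi-\alpha_1-\cdots-\alpha_k$ with $\alpha_1\in\Pi_a\setminus\Pi_i$, and $\alpha_1$ restricts to a simple restricted root.
\item You instead place the top ${\bf A}$-weight space inside $V^{\bf U}$, which is clear since $\mathfrak u$ strictly raises ${\bf A}$-weights. You then use that $V^{\bf U}$ is an irreducible ${\bf Z}$-module, so the ${\bf Z}$-stable line $V^{\bf N}$ must exhaust it.
\end{itemize}

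The paper's route is more elementary: it only uses that $V$ is generated by $v_\chi$ under $\mathfrak n^-$. Yours imports a standard but nontrivial fact about Levi subgroups. In return it gives a conceptual characterization: ${\bf P}$-proximality means exactly that the highest restricted weight space is a line. Your explicit treatment of $z=am$ on ${\bf A}$-weight spaces also makes precise the paper's somewhat informal statement that the eigenvalues of $\rho(g)$ are the $\chi'(\lambda(g))$.

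In the converse, three points should be tidied.

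\begin{itemize}
\item The weight you need is $\chi-\beta$, for an absolute simple root $\beta$ restricting to $\alpha$, not $\chi-\alpha$. It is a weight because $(\chi,\beta)>0$.
\item No Galois averaging is needed. By Proposition \ref{Levicharacter1}, a suitable multiple of a single $\chi_\beta$ already lies in ${\rm X}({\bf Z})$. This is exactly the paper's ${\bf P}_\alpha$-proximal choice, with the roles of the letters $\alpha,\beta$ swapped.
\item You do not need the second largest modulus; your claim that every other weight lies below $\chi-\beta$ is true for your $\chi$ but unnecessary. It suffices that if $\de\alpha(\lambda_{\mathbb R}(g))=0$, then the maximal modulus $e^{\de\chi(\lambda_{\mathbb R}(g))}$ is attained by at least two eigenvalues counted with multiplicity. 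That already contradicts proximality.
\end{itemize}
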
 

\begin{proof} We can assume ${\bf A}\subset {\bf T}$ and, by \cite[Sect. 18.2]{Bo}, that ${\bf T}$ is defined over $\mathbb R$. 
We let $\Sigma_r\subset {\rm X}({\bf A})$ be the set of relative roots, that is, the set of weights of the adjoint action of ${\bf A}$ on the Lie algebra of ${\bf G}$. When choosing ${\bf P}$, we have chosen a positive system $\Sigma_a^+\subset\Sigma_a$. We write $\Pi_a\subset \Sigma_a^+$ for the associated basis. By \cite[Sect. 21.8]{Bo}, we can assume that the orders on the sets of absolute roots $\Sigma_a\subset {\rm X}({\bf T})$ and on the set of relative roots $\Sigma_r\subset {\rm X}({\bf A})$ are compatible in the following sense: if $\alpha$ is in $\Sigma_a^+$ and $\alpha$ has nonzero restriction to ${\bf A}$, then $\alpha_{|{\bf A}}$ belongs to $\Sigma_r^+$. Finally, we let $\Pi_i\subset \Pi_a$ (imaginary simple roots) be the set of simple roots with zero restriction to ${\bf A}$: the group ${\bf P}$ is the standard parabolic subgroup associated with $\Pi_i$.

Let $\rho:{\bf G}\rightarrow {\rm GL}(V)$ be a ${\bf P}$-proximal irreducible representation with highest weight $\chi$. Since ${\bf P}V_{\chi}\subset V_{\chi}$, the weights $\chi'\neq\chi$ of ${\bf T}$ in $V$ are of the form $\chi'=\chi-\alpha_1-\cdots -\alpha_k$, where $\alpha_1,\ldots,\alpha_k\in \Pi_a$ and $\alpha_1\not\in\Pi_i$ (and hence, the restriction of $\alpha_1$ to ${\bf A}$ is in $\Pi_r$). For $g$ in $G$, the eigenvalues of $\rho(g)$ in $V$ are the $\chi'(\lambda(g))$, where $\chi'$ runs among the weights of ${\bf T}$, and for such a $\chi'$, the modulus of $\chi'(\lambda(g))$ is given by $|\chi'(\lambda(g))|=e^{\de \chi'(\lambda_{\mathbb R}(g))}$ (with the notation of Section \ref{intro}). Saying that $g$ is loxodromic is saying that, for all $\beta\in\Pi_r$, one has $\de\beta(\lambda_{\mathbb R}(g))>0$ and hence, for all $\chi'\neq\chi$,  $|\chi'(\lambda(g))|<|\chi(\lambda(g))|$. Thus, $\chi(\lambda(g))$ is the unique eigenvalue of $\rho(g)$ with maximum modulus. Since $V_\chi$ is a line, this eigenvalue has multiplicity one. Therefore, $\rho(g)$ is proximal and $\lambda_1(\rho(g))=\chi(\lambda(g))$.

Conversely, for each $\beta$ in $\Pi_r$, there exists $\alpha$ in $\Pi_a$ such that $\alpha_{|{\bf A}}=\beta$. As in section \ref{proxrep}, let ${\bf P}_{\alpha}$ be the standard parabolic subgroup associated with $\Pi_{a}\setminus\{\alpha\}$. Then, since $\alpha\notin\Pi_i$, we have ${\bf P}\subset{\bf P}_{\alpha}$.
Since ${\bf P}_{\alpha}\neq {\bf G}$, the group ${\bf P}_{\alpha}$ admits characters which are not restrictions of characters of ${\bf G}$. By Corollary \ref{Levicharacter2}, we may therefore find a character $\chi_\alpha$ of ${\bf P}_{\alpha}$ which is the highest weight of a ${\bf P}_\alpha$-proximal irreducible representation 
$\rho_\alpha:{\bf G}\rightarrow {\rm GL}(V_\alpha)$, where $V_\alpha$ has dimension $>1$. Then, as the weight space $V_{\chi_\alpha}$ is ${\bf P}_\alpha$-invariant,
necessarily $\chi_{\alpha}-\alpha$ is a weight of ${\bf T}$ in $V_\alpha$. Therefore, for $g$ in $G$, if $\rho_\alpha(g)$ is proximal, we must have 
$\de\alpha(\lambda_{\mathbb R}(g))=\de\beta(\lambda_{\mathbb R}(g))>0$. Thus, to check that $g$ is loxodromic, it suffices to check that the finitely many $\rho_{\alpha} (g)$ are proximal.
\end{proof}

We now have all the tools in hand to conclude the

\begin{proof}[Proof of Proposition \ref{asymptoticcrossratio}]
This directly follows from Lemma \ref{asymptoticcrossratio1}, Corollary \ref{Levicharacter2}, Lemma \ref{representcrossratio} and Lemma \ref{checkproximal}.
\end{proof}

\section{Structure of the group $Z/[Z,Z]$}
\label{secstructureZAb}

We will now use Proposition \ref{asymptoticcrossratio} to prove Theorem \ref{densityJordan}. The strategy is to build points in $\mathcal P$ for which the cross ratio is very close to $e$ in $Z/[Z,Z]$ but not contained in any given proper closed subgroup of $Z/[Z,Z]$. To make this precise, we need a better understanding of the structure of the group $Z/[Z,Z]$.

We keep the previously introduced notation. In the absence of an adequate reference, we recall some classical facts on the relation between absolute and relative roots. As the group ${\bf T}$ is defined over $\mathbb R$ the complex conjugacy $g\mapsto \overline{g}$ leaves ${\bf T}$ stable and hence induces an automorphism of the root system $\Sigma_a$ which we still denote by $\alpha\mapsto\overline{\alpha}$. Denote by $\Sigma_i\subset\Sigma_a$ the subsystem spanned by $\Pi_i$ and set $\Sigma_i^+=\Sigma_i\cap\Sigma_a^+$. 

A root $\alpha$ in $\Sigma_a$ belongs to $\Sigma_i$ if and only if $\overline{\alpha}=-\alpha$. 
In particular, for $\alpha$ in $\Sigma_i$, the reflection $\sigma_{\alpha}$ commutes with the automorphism $x\mapsto\overline{x}$ of ${\rm X}_{\mathbb R}({\bf T})=\mathbb R\otimes_{\mathbb Z}{\rm X}({\bf T})$, hence, so does every element of the Weyl group $W_i$ of $\Sigma_i$. There exists a unique element $w_i$ of $W_i$ such that $w\Sigma_i^+=-\Sigma_i^+$. 

\begin{Lem}\label{roots1}
The automorphism $\alpha\mapsto w_i\overline{\alpha}=\overline{w_i\alpha}$ preserves $\Sigma_a^+$. 
\end{Lem}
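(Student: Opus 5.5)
The plan is to split $\Sigma_a^+$ into the imaginary positive roots $\Sigma_i^+$ and the remaining positive roots $\Sigma_a^+\setminus\Sigma_i$, and to treat each part separately. Three facts are used: the compatibility of the absolute and relative orders fixed in the proof of Lemma \ref{checkproximal}, the description of $\Sigma_i$ as the roots with $\overline{\alpha}=-\alpha$, and the fact that both conjugation and $W_i$ leave restrictions to ${\bf A}$ unchanged. The equality $w_i\overline{\alpha}=\overline{w_i\alpha}$ was already noted: every element of $W_i$ commutes with conjugation.

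The first step is to check that, for every $\chi$ in ${\rm X}({\bf T})$, one has $\overline{\chi}_{|{\bf A}}=\chi_{|{\bf A}}$. Here $\overline{\chi}(t)=\overline{\chi(\overline t)}$. Since ${\bf A}$ is $\mathbb R$-split, every character of ${\bf A}$ is defined over $\mathbb R$. Hence, for $a$ in $A={\bf A}(\mathbb R)$, we have $\overline a=a$ and $\chi(a)\in\mathbb R$, so $\overline{\chi}(a)=\chi(a)$. Since $A$ is Zariski dense in ${\bf A}$, the two restrictions agree.

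The second step concerns $W_i$. For $\beta$ in $\Sigma_i$, the reflection $\sigma_\beta(x)=x-\langle x,\beta^\vee\rangle\beta$ does not change the restriction to ${\bf A}$, because $\beta_{|{\bf A}}=0$. Hence every element of $W_i$, and in particular $w_i$, preserves restrictions to ${\bf A}$.

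Now take $\alpha$ in $\Sigma_a^+\setminus\Sigma_i$. Its restriction $\alpha_{|{\bf A}}$ is nonzero, and by the compatibility of orders it lies in $\Sigma_r^+$. By the two previous steps, $(w_i\overline\alpha)_{|{\bf A}}=\alpha_{|{\bf A}}$. If $w_i\overline\alpha$ were negative, then $-w_i\overline\alpha$ would be a positive root with nonzero restriction. Its restriction $-\alpha_{|{\bf A}}$ would therefore be a positive relative root, which is absurd. Hence $w_i\overline\alpha\in\Sigma_a^+$.

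Next take $\alpha$ in $\Sigma_i^+$. Then $\overline\alpha=-\alpha\in-\Sigma_i^+$. Since $w_i(-\Sigma_i^+)=\Sigma_i^+$, the root $w_i\overline\alpha$ lies in $\Sigma_i^+\subset\Sigma_a^+$.

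The map $\alpha\mapsto w_i\overline\alpha$ is a bijection of the finite set $\Sigma_a$, and we have shown that it sends $\Sigma_a^+$ into $\Sigma_a^+$. It therefore preserves $\Sigma_a^+$.

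The only point that needs care is the first step, namely that conjugation does not change restrictions to ${\bf A}$. This rests on ${\bf A}$ being split and on the definition of the conjugation action on characters. The rest is formal.
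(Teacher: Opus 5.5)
Your proof is correct and follows the paper's argument. Both split $\Sigma_a^+$ into $\Sigma_i^+$ and $\Sigma_a^+\setminus\Sigma_i^+$, both use that conjugation does not change restrictions to the split torus ${\bf A}$, and both rely on the compatibility of the absolute and relative orders. The only difference is in the non-imaginary case: you show that $W_i$ preserves restrictions to ${\bf A}$ and apply order compatibility directly to $w_i\overline{\alpha}$, whereas the paper first shows $\overline{\alpha}\in\Sigma_a^+\setminus\Sigma_i^+$ and then uses that each $\sigma_\beta$ with $\beta\in\Pi_i$, and hence $w_i$, stabilizes $\Sigma_a^+\setminus\Sigma_i^+$.
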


\begin{proof} On one hand, if $\alpha$ is in $\Sigma_i^+$, we have $\overline{\alpha}=-\alpha$, hence $w_i\overline{\alpha}\in\Sigma_i^+$. 

On the other hand, for every $\beta$ in $\Pi_i$ and $\alpha$ in $\Sigma_a^+\setminus \Sigma_i^+$, we have $\sigma_\beta(\alpha)\in \Sigma_a^+\setminus \Sigma_i^+$. It follows that $w_i(\Sigma_a^+\setminus \Sigma_i^+)=(\Sigma_a^+\setminus \Sigma_i^+)$. Also, since $\alpha$ is not in $\Sigma_i^+$, the restriction $\gamma=\alpha_{|{\bf A}}$ of $\alpha$ to ${\bf A}$ is in $\Sigma_r^+$. As ${\bf A}$ is split, we have $\overline{\gamma}=\gamma$, hence $\gamma$ is also the restriction of $\overline{\alpha}$ to ${\bf A}$. This implies that $\overline{\alpha}$ belongs to $\Sigma_a^+\setminus \Sigma_i^+$ and therefore that $w_i\overline{\alpha}$ belongs to $\Sigma_a^+\setminus \Sigma_i^+$.
\end{proof} 

Note that since $w_i$ commutes with complex conjugacy, the map $\alpha\mapsto w_i\overline{\alpha}$ is an involution. We set 
$$r=|\{\alpha\in\Pi_a\setminus \Pi_i|w_i\overline{\alpha}={\alpha}\}|\mbox{ and }s=\frac{1}{2}|\{\alpha\in\Pi_a\setminus \Pi_i|w_i\overline{\alpha}\neq {\alpha}\}|.$$

\begin{Lem}\label{roots2} Every simple relative root in $\Pi_r$ is the restriction of a simple absolute root in $\Pi_a\setminus\Pi_i$. 
If $\alpha,\beta$ are in $\Pi_a\setminus\Pi_i$. Then, the simple roots $\alpha$ and $\beta$ have the same restriction to ${\bf A}$ if and only if $\beta=\alpha$ or $\beta=w_i\overline{\alpha}$. In particular, the real rank of ${\bf S}=[{\bf G},{\bf G}]$ is $r+s$.
\end{Lem}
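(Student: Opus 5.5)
The plan is to encode complex conjugation on ${\rm X}_{\mathbb R}({\bf T})$ through the involution $\tau:\alpha\mapsto w_i\overline{\alpha}$ of Lemma \ref{roots1}, and then to describe the kernel of the restriction map to ${\bf A}$ explicitly. By Lemma \ref{roots1}, $\tau$ is a linear involution of ${\rm X}_{\mathbb R}({\bf T})$ preserving $\Sigma_a^+$, so it permutes $\Pi_a$. It also preserves $\Pi_i$, because for $\alpha\in\Pi_i$ we have $w_i\overline{\alpha}=-w_i\alpha\in\Pi_i$. Writing $c$ for the action of conjugation on ${\rm X}_{\mathbb R}({\bf T})$, we have $c=w_i^{-1}\tau=w_i\tau$. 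Since $w_i\in W_i$, we get $w_i\chi-\chi\in{\rm span}(\Pi_i)$ for every $\chi$. Hence
$$c\chi\equiv\tau\chi \mod {\rm span}(\Pi_i).$$

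Next I would identify restriction to ${\bf A}$. Since ${\bf A}$ is the maximal split subtorus of the $\mathbb R$-torus ${\bf T}$, a character of ${\bf T}$ restricts trivially to ${\bf A}$ (after tensoring with $\mathbb R$) exactly when $c\chi=-\chi$. So the restriction map ${\rm X}_{\mathbb R}({\bf T})\to{\rm X}_{\mathbb R}({\bf A})$ is surjective with kernel $\ker(c+1)$.

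Roots in $\Pi_i$ restrict to $0$, since they satisfy $\overline{\alpha}=-\alpha$. For $\alpha\in\Pi_a\setminus\Pi_i$, the congruence above gives $c\alpha-\tau\alpha\in{\rm span}(\Pi_i)$, so $\alpha$ and $\tau\alpha$ have the same restriction to ${\bf A}$. This proves the ``if'' direction of the second claim.

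For the converse, and for the rank count, I would compute $\ker(c+1)$ on the span of $\Pi_a$. Write $\chi=x+y$ with $x\in{\rm span}(\Pi_a\setminus\Pi_i)$ and $y\in{\rm span}(\Pi_i)$. Then
$$c\chi\equiv \tau x \mod {\rm span}(\Pi_i).$$
Because $\tau$ permutes the basis $\Pi_a\setminus\Pi_i$, the condition $c\chi=-\chi$ forces $\tau x=-x$. Thus $x$ lies in the span of the vectors $\alpha-\tau\alpha$, and the kernel is contained in
$${\rm span}(\Pi_i)+{\rm span}\{\alpha-\tau\alpha\mid\alpha\in\Pi_a\setminus\Pi_i\}.$$
The reverse inclusion was shown above. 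Consequently the restrictions of a set of representatives of the $\tau$-orbits in $\Pi_a\setminus\Pi_i$ are linearly independent. There are exactly $r+s$ such orbits, and these restrictions span the image of ${\rm span}(\Pi_a)$, namely ${\rm X}_{\mathbb R}$ of ${\bf A}\cap{\bf S}$. In particular two simple roots in different $\tau$-orbits have different (indeed independent) restrictions, which gives the ``only if'' direction, and the real rank of ${\bf S}$ is $r+s$.

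Finally, for the first claim: every positive relative root is the restriction of some $\alpha\in\Sigma_a^+$ with nonzero restriction. Such a root is a nonnegative integer combination of $\Pi_a$, so its restriction is a nonnegative integer combination of the $r+s$ independent restricted simple roots. These restricted simple roots are themselves nonzero positive relative roots. A set of linearly independent positive roots through which every positive root decomposes with nonnegative coefficients is the basis $\Pi_r$. The main obstacle I expect is the justification that $\ker({\rm restriction})=\ker(c+1)$ and that $c=w_i\tau$ with $\tau$ permuting $\Pi_a$. This relies on ${\bf T}$ being a maximal torus containing ${\bf A}$ whose anisotropic part is accounted for by $\Sigma_i$. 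The central part of ${\bf T}$ can be discarded by working inside ${\rm span}(\Pi_a)$, as the statement concerns ${\bf S}$.
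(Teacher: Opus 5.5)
Your proof is correct, but it is organized differently from the paper's.

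\textbf{The paper's route.} It proves the first claim first, combinatorially. It writes a positive absolute root $\beta$ restricting to a simple relative root as a sum of simple roots, and argues that exactly one term lies in $\Pi_a\setminus\Pi_i$; otherwise $\beta$ would split as a sum of two roots with positive restrictions. It then computes $\dim{\bf A}=r+s$ by identifying ${\rm X}_{\mathbb R}({\bf A})$ with the conjugation-fixed vectors. These lie in the orthogonal complement of ${\rm span}(\Sigma_i)$, where $w_i$ is trivial, so they correspond to the $\tau$-fixed vectors of the quotient by ${\rm span}(\Pi_i)$. The ``only if'' direction is then a counting argument: the surjection $\Pi_a\setminus\Pi_i\to\Pi_r$ is constant on the $r+s$ orbits of $\tau$, and $|\Pi_r|=\dim{\bf A}=r+s$.

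\textbf{Your route.} You instead compute the kernel of restriction, i.e.\ the $(-1)$-eigenspace of conjugation, via the congruence $c\equiv\tau$ modulo ${\rm span}(\Pi_i)$. This gives linear independence of the restricted orbit representatives directly. Injectivity on orbits and the rank count then need no counting, and the first claim follows from the characterization of a base.

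\textbf{What each buys.} Your first step is pure linear algebra. The paper's assertion that, after reordering, $\beta$ splits into two roots $\gamma_1,\gamma_2$, each containing a non-imaginary simple root, is stated without proof. In exchange, the paper's route gives an extra structural fact: a root restricting to a simple relative root has exactly one non-imaginary simple constituent.

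\textbf{One wording point.} In your final sentence, write ``nonnegative \emph{integer} coefficients''. The relative root system $\Sigma_r$ may be non-reduced: in type $BC_1$, the set $\{2\delta\}$ is independent and $\delta=\frac12(2\delta)$, yet it is not a base. Your coefficients do come from restrictions of integer combinations, so the argument stands.
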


\begin{proof} We can assume ${\bf G}$ to be semisimple.

Let $\alpha$ be in $\Pi_r$ and $\beta$ be a root in $\Sigma_a$ which restricts to $\alpha$. As $\alpha$ is positive, $\beta$ is in $\Sigma_a^+$. Write $\beta$ as a sum of simple roots, $\beta=\beta_1+\cdots+\beta_\ell$, with $\beta_1,\ldots,\beta_\ell\in\Pi_a$. We claim that there exists a unique $1\leq k\leq \ell$ with $\beta_k\in\Pi_a\setminus \Pi_i$. Indeed, if this were not the case, we could find $1\leq h,k\leq \ell$ with $\beta_h,\beta_k\in\Pi_a\setminus \Pi_i$. Then, up to shuffling the indices, there would exist $h\leq j< k$ such that both $\gamma_1=\beta_1+\cdots+\beta_j$ and $\gamma_2=\beta_{j+1}+\cdots+\beta_{\ell}$ would be roots. But then the restrictions $\alpha_1,\alpha_2$ of $\gamma_1,\gamma_2$ to ${\bf A}$ would be positive relative roots and one would have $\alpha=\alpha_1+\alpha_2$, a contradiction. Therefore, up to permuting the indices, we have $\beta_1\in\Pi_a\setminus \Pi_i$ and $\beta_2,\ldots,\beta_\ell\in\Pi_i$ and $\alpha$ is the restriction of $\beta_1$. This proves the first statement.

The other statements will be obtained by computing the dimension of the torus ${\bf A}$ in two ways.

On one hand, as ${\bf A}$ is the largest split subtorus of ${\bf T}$, the restriction to ${\bf A}$ of characters of ${\bf T}$
induces an isomorphism between the spaces ${\rm X}_{\mathbb R}({\bf A})=\mathbb R\otimes_{\mathbb Z}{\rm X}({\bf A})$ and $\{x\in {\rm X}_{\mathbb R}({\bf T})|\overline{x}=x\}$. This space is contained in the orthogonal complement $Y$ in ${\rm X}_{\mathbb R}({\bf T})$ of the subspace $X_i$ spanned by $\Sigma_i$ (where we have equipped ${\rm X}_{\mathbb R}({\bf T})$ with a scalar product invariant under the Weyl group and complex conjugacy). As $\Sigma_i$ is stable under complex conjugacy, so is $Y$. As $w_i$ is a product of reflections associated to elements of $\Sigma_i$, $w_i$ is the identity on $Y$. The dimension of ${\bf A}$ is therefore given by $\dim{\bf A}=\dim_{\mathbb R}\{x\in {\rm X}_{\mathbb R}({\bf T})/X_i|w_i\overline{x}=x\}$. Now, the space spanned by the roots in $\Pi_a\setminus\Pi_i$ is also a complementary subspace of $X_i$ which is stable under the involution $x\mapsto w_i\overline{x}$. Thus, we obtain $\dim{\bf A}=r+s$.

On the other hand we have shown above that the restriction map is a surjection $\Pi_a\setminus\Pi_i\rightarrow\Pi_r$. Besides, if $\alpha$ is in $\Pi_a\setminus\Pi_i$, then $\alpha$ and $w_i\overline{\alpha}$ have the same restriction to ${\bf A}$: indeed, $\alpha$ and $\overline{\alpha}$ have the same restriction and $w_i\overline{\alpha}-\overline{\alpha}$ is a sum of elements of $\Sigma_i$ which have trivial restriction. As we know that $|\Pi_r|=\dim{\bf A}=r+s$, the conclusion follows.
\end{proof}

We fix a set $\Delta\subset\Pi_a\setminus\Pi_i$ as follows: the set $\Delta$ contains every root $\alpha$ in $\Pi_a\setminus\Pi_i$ such that $w_i\overline{\alpha}=\alpha$; for every root $\alpha$ in $\Pi_a\setminus\Pi_i$ with $w_i\overline{\alpha}\neq\alpha$, the set $\Delta$ contains exactly one of the two simple roots $\alpha$ and $w_i\overline{\alpha}$. By Lemma \ref{roots2}, the restriction map is a bijection $\Delta \rightarrow\Pi_r$.

Assume now ${\bf G}$ is semisimple and simply connected.
As in Section \ref{proxrep}, for $\alpha$ in $\Pi_a$, we denote by $\chi_{\alpha}$ the associated fundamental weight. Since ${\bf G}$ is simply connected, the group of characters of ${\bf T}$ is ${\rm X}({\bf T})=\bigoplus_{\alpha\in\Pi_a}\mathbb Z\chi_{\alpha}$ and hence, by Proposition \ref{Levicharacter1}, the group of characters of ${\bf Z}$ is ${\rm X}({\bf Z})={\rm X}({\bf Z}/[{\bf Z},{\bf Z}])=\bigoplus_{\alpha\in\Pi_a\setminus\Pi_i}\mathbb Z\chi_{\alpha}$.

Using these tools, we can describe the group $Z/[Z,Z]$.

\begin{Prop} \label{ZAb}
Suppose ${\bf G}$ is semisimple and simply connected.

If $\alpha$ is in $\Pi_a\setminus\Pi_i$ and $w_i\overline{\alpha}=\alpha$, we have either $\chi_{\alpha}(Z)=\mathbb R^\star$ or $\chi_{\alpha}(Z)=\mathbb R^\star_+$.

If $\alpha$ is in $\Pi_a\setminus\Pi_i$ and $w_i\overline{\alpha}\neq\alpha$, we have $\chi_{\alpha}(Z)=\mathbb C^\star$.

The characters $(\chi_{\alpha})_{\alpha\in\Delta}$ define an isomorphism of real Lie groups between $Z/[Z,Z]$ and an open subgroup of 
$(\mathbb R^\star)^r\times (\mathbb C^\star)^s$.
\end{Prop}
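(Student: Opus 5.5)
We analyse how complex conjugation acts on the characters of ${\bf Z}$, and then combine this with the structure $Z=MA$ coming from the real rank.

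\emph{Step 1: conjugation on ${\rm X}({\bf Z})$.} Since ${\bf Z}$ is defined over $\mathbb R$, complex conjugation acts on ${\rm X}({\bf Z})$ by $\chi\mapsto\overline{\chi}$, where $\overline{\chi}(z)=\overline{\chi(\overline z)}$. Restricted to ${\bf T}$, this is the conjugation action on ${\rm X}({\bf T})$. Because characters of ${\bf Z}$ are invariant under the Weyl group $W_i$ of ${\bf Z}$ (as in the proof of Proposition \ref{Levicharacter1}), we have $\overline{\chi}=w_i\overline{\chi}$. We claim that for $\alpha\in\Pi_a\setminus\Pi_i$ one has $w_i\overline{\chi_\alpha}=\chi_{w_i\overline\alpha}$. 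By Lemma \ref{roots1}, $\beta\mapsto w_i\overline\beta$ permutes $\Sigma_a^+$, hence permutes $\Pi_a$. It is also an isometry for a scalar product invariant under $W$ and conjugation. The fundamental weights are determined by their pairings with $\Pi_a$, so the claim follows. Consequently $\overline{\chi_\alpha}=\chi_{w_i\overline\alpha}$ as characters of ${\bf Z}$. In particular, for $z\in Z$ we get $\overline{\chi_\alpha(z)}=\chi_{w_i\overline\alpha}(z)$.

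\emph{Step 2: the real case and the image of $\chi_\alpha$ in general.} If $w_i\overline\alpha=\alpha$, then $\chi_\alpha$ is defined over $\mathbb R$, so $\chi_\alpha(Z)\subset\mathbb R^\star$. Write $Z=MA$ with $M$ compact. Then $\chi_\alpha(A)=\mathbb R^\star_+$, because $d\chi_\alpha$ restricted to $\mathfrak a$ is nonzero: it pairs positively with the coroot of $\alpha|_{\bf A}$. Also $\chi_\alpha(M)$ is a compact subgroup of $\mathbb R^\star$, hence contained in $\{\pm1\}$. Together these give the first assertion.

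\emph{Step 3: the complex case.} If $w_i\overline\alpha=\beta\neq\alpha$, we must show $\chi_\alpha(Z)=\mathbb C^\star$. The modulus is $|\chi_\alpha|=(\chi_\alpha\chi_\beta)^{1/2}$, and it already takes all positive values on $A$. It therefore suffices to show that $\chi_\alpha(M^\circ)$ is the whole unit circle, that is, that $d\chi_\alpha$ is nonzero on the Lie algebra of the compact part of the centre of $Z^\circ$. Consider the real torus $({\bf Z}/[{\bf Z},{\bf Z}])^\circ$. Its compact (anisotropic) part has character lattice, modulo torsion, equal to the quotient of ${\rm X}({\bf Z})$ by the conjugation-invariant characters. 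The character $\chi_\alpha-\chi_\beta$ is anti-invariant, since $\overline{\chi_\alpha-\chi_\beta}=\chi_\beta-\chi_\alpha$, and it is nonzero. Hence it is nontrivial on the anisotropic part, and there $\chi_\alpha=\chi_\beta^{-1}=\overline{\chi_\alpha}^{-1}$ has modulus one. So $\chi_\alpha$ restricted to the compact torus is a nontrivial character into $S^1$, and its image is $S^1$.

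\emph{Step 4: the isomorphism.} Consider the map $\Phi=(\chi_\alpha)_{\alpha\in\Delta}:Z/[Z,Z]\to(\mathbb R^\star)^r\times(\mathbb C^\star)^s$.

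For injectivity, note that ${\rm X}({\bf Z})=\bigoplus_{\Pi_a\setminus\Pi_i}\mathbb Z\chi_\alpha$, and the remaining $\chi_\beta$ are complex conjugates of the $\chi_\alpha$ with $\alpha\in\Delta$. So if $\Phi(z)=1$, every character of ${\bf Z}$ is trivial on $z$. Hence $z$ lies in $[{\bf Z},{\bf Z}](\mathbb R)=[Z,Z]$, by Lemma \ref{goodAbelian}.

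For openness of the image, compare dimensions. We have $\dim Z/[Z,Z]=\dim({\bf Z}/[{\bf Z},{\bf Z}])=|\Pi_a\setminus\Pi_i|=r+2s$, which is the real dimension of the target. An injective Lie group morphism between groups of the same dimension has open image.

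The main obstacle is Step 3: identifying the anti-invariant characters with the characters of the compact part of the torus, and making sure $\chi_\alpha$ is genuinely nontrivial there rather than only having nontrivial differential in the split directions.
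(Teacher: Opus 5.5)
Your proposal is correct. Step 1 is the paper's key computation: $\overline{\chi_\alpha}=\chi_{w_i\overline{\alpha}}$ on ${\bf Z}$, because $x\mapsto w_i\overline{x}$ permutes $\Pi_a$ and characters of ${\bf Z}$ are $W_i$-invariant.

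The two arguments differ in how they finish. The paper only notes that this describes the real structure of the torus ${\bf Z}/[{\bf Z},{\bf Z}]$: its character lattice has basis $(\chi_\alpha)_{\alpha\in\Pi_a\setminus\Pi_i}$, permuted by conjugation. It then appeals to elementary facts on tori. The torus is $\mathbb G_m^r\times({\rm Res}_{\mathbb C/\mathbb R}\mathbb G_m)^s$, with real points $(\mathbb R^\star)^r\times(\mathbb C^\star)^s$ via $(\chi_\alpha)_{\alpha\in\Delta}$, and Lemma \ref{goodAbelian} embeds $Z/[Z,Z]$ as an open subgroup. All three assertions follow at once.

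Your Step 4 does this by hand, and it is complete. Injectivity comes from characters separating points of ${\bf Z}/[{\bf Z},{\bf Z}]$ together with Lemma \ref{goodAbelian}. Openness comes from the dimension count $r+2s$. It uses only Step 1.

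Step 4 already gives the first two assertions. The projection of an open subgroup to a factor is an open subgroup. The open subgroups of $\mathbb R^\star$ are $\mathbb R^\star$ and $\mathbb R^\star_+$, and $\mathbb C^\star$ is connected. For $\alpha\notin\Delta$, use $\chi_\alpha=\overline{\chi_{w_i\overline{\alpha}}}$ on $Z$. So Steps 2--3, with the decomposition $Z=MA$ and the anisotropic part, are redundant.

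As written, Step 3 also has a small missing link. You show that $\chi_\alpha$ maps the real points of the anisotropic part of ${\bf Z}/[{\bf Z},{\bf Z}]$ onto $S^1$. To transfer this to $Z$, you need the image of $Z$ in $({\bf Z}/[{\bf Z},{\bf Z}])(\mathbb R)$ to contain the identity component. That is again the dimension count of Step 4.

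A minor point on Step 2: the paper's $A$ is ${\bf A}(\mathbb R)$. So you only get $\chi_\alpha(A^\circ)=\mathbb R^\star_+$, which is all your conclusion needs.
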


\begin{proof} As the automorphism $\alpha\mapsto w_i\overline{\alpha}$ of the root system $\Sigma_a$ preserves the basis $\Pi_a$, for $\alpha$ in $\Pi_a$, we have $\chi_{w_i\overline{\alpha}}=w_i\overline{\chi_\alpha}$. Now, if $\alpha$ is in $\Pi_a\setminus\Pi_i$, 
the character $\chi_{\alpha}$ is orthogonal to $\Sigma_i$ hence $w_i\chi_{\alpha}=\chi_{\alpha}$, since $w_i$ is a product of reflections associated with elements of $\Sigma_i$. This yields $\overline{\chi_\alpha}=\chi_{w_i\overline{\alpha}}$. By Proposition \ref{Levicharacter1}, we have 
${\rm X}({\bf Z}/[{\bf Z},{\bf Z}])=\bigoplus_{\alpha\in\Pi_a\setminus\Pi_i}\mathbb Z\chi_{\alpha}$ and we have just shown the real form on this torus is defined by the automorphism which sends $\chi_{\alpha}$ to $\chi_{w_i\overline{\alpha}}$, $\alpha\in \Pi_a\setminus\Pi_i$. The conclusion now follows from elementary properties of algebraic groups (by using Lemma \ref{goodAbelian}).
\end{proof}

\begin{Ex} If $G={\rm SL}_2(\mathbb R)$, we have $Z/[Z,Z]\simeq \mathbb R^\star$. If $G={\rm SL}_2(\mathbb C)$, we have $Z/[Z,Z]\simeq \mathbb C^\star$. If $G={\rm SL}_2(\mathbb H)$, we have $Z/[Z,Z]\simeq \mathbb R_+^\times$. 
\end{Ex}

\section{Asymptotic expansion of cross ratios}

To conclude the proof of Theorem \ref{densityJordan}, we will build quadruples in $\mathcal W$ for which the cross ratio is close to the identity in $Z/[Z,Z]$ and its components in the decomposition of Proposition \ref{ZAb} have a nice asymptotic expansion. This is a direct extension of the method used in \cite{BQ,Q1}. The only change is that we need to control the full components in $\mathbb C^\star$ and not only their moduli.

\subsection{Zariski dense subsemigroups} Let $\Gamma\subset G$ be a Zariski dense subsemigroup. We recall some general facts.

The main result of \cite{P} is that $\Gamma$ contains a loxodromic element. The closure of the set of attractive fixed points in $\mathcal P$ of such elements is called the limit set of $\Gamma$ and denoted by $\Lambda_{\Gamma}$ (see \cite{Be1}). For brevity, we write $\Lambda_{\Gamma}^-=\Lambda_{\Gamma^{-1}}$. These sets are Zariski dense in $\mathcal P$.

As a consequence of Proposition \ref{asymptoticcrossratio}, we get:

\begin{Cor} \label{asymptoticcrossratio2} Let $\Gamma\subset G$ be a Zariski dense subsemigroup. Then, the closure in $Z/[Z,Z]$ of the set of elements of the form
$\lambda(gh)\lambda(g)^{-1}\lambda(h)^{-1}$, where $(g,h)$ are loxodromic elements of $\Gamma$ such that the product $gh$ is loxodromic, contains the set of cross ratios of elements of $\mathcal W\cap\Lambda_\Gamma\times \Lambda_{\Gamma}^-\times\Lambda_\Gamma\times \Lambda_{\Gamma}^-$.
\end{Cor}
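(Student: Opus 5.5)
The plan is to reduce to pairs of loxodromic elements of $\Gamma$ with attracting and repelling points in general position, apply Proposition \ref{asymptoticcrossratio}, and conclude by density of such fixed points in $\Lambda_\Gamma\times\Lambda_\Gamma^-$ together with continuity of the cross ratio on the open set $\mathcal W$.

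\emph{Step 1.} For $g,h$ loxodromic in $\Gamma$ with $(\xi_g^+,\xi_g^-,\xi_h^+,\xi_h^-)\in\mathcal W$, the elements $g^m$ and $h^n$ lie in $\Gamma$ and are loxodromic, with $\lambda(g^m)=\lambda(g)^m$ and $\lambda(h^n)=\lambda(h)^n$. By Proposition \ref{asymptoticcrossratio}, $g^mh^n$ is loxodromic for $m,n$ large, and
$$\lambda(g^mh^n)\lambda(g^m)^{-1}\lambda(h^n)^{-1}\td{m,n}{\infty}[\xi_g^+,\xi_g^-,\xi_h^+,\xi_h^-].$$
So every such cross ratio lies in the closure $F$ of the set of elements $\lambda(gh)\lambda(g)^{-1}\lambda(h)^{-1}$ from the statement.

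\emph{Step 2.} Take $(\xi,\eta,\xi',\eta')\in\mathcal W\cap\Lambda_\Gamma\times\Lambda_\Gamma^-\times\Lambda_\Gamma\times\Lambda_\Gamma^-$. I want loxodromic $g,h\in\Gamma$ with $(\xi_g^+,\xi_g^-)$ close to $(\xi,\eta)$ and $(\xi_h^+,\xi_h^-)$ close to $(\xi',\eta')$. Since $\mathcal W$ is open, such a quadruple of fixed points stays in $\mathcal W$. Since the cross ratio is continuous on $\mathcal W$ (it is regular), Step 1 then gives $[\xi,\eta,\xi',\eta']\in F$.

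\emph{Step 3.} This is the main obstacle: showing that the pairs $(\xi_g^+,\xi_g^-)$, $g\in\Gamma$ loxodromic, are dense in $\Lambda_\Gamma\times\Lambda_\Gamma^-$. Density of each coordinate separately is only given by the definition of the limit sets, so the coordinates must be coupled. I would use the standard Benoist argument, as in \cite{Be1} and \cite[Chap. 8]{BQ}.
\begin{enumerate}
\item Choose loxodromic $g_1,g_2\in\Gamma$ with $\xi_{g_1}^+$ near $\xi$ and $\xi_{g_2}^-$ near $\eta$. Here $\xi_{g_2}^-$ is an attracting point of $g_2^{-1}\in\Gamma^{-1}$, and these points are dense in $\Lambda_\Gamma^-$ by definition.
\item Using Zariski density of $\Gamma$, pick $\gamma\in\Gamma$ such that $\gamma\xi_{g_2}^+$ is in general position with $\xi_{g_1}^-$. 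Zariski density of $\Lambda_\Gamma$ also allows generic perturbations, so I may assume the needed general-position conditions hold.
\item Consider $g=g_1^{m}\gamma g_2^{n}$ for large $m,n$. By the proximality contraction argument in each representation $\rho_i$ of Lemma \ref{checkproximal}, together with Lemma \ref{asymptoticcrossratio1}, the element $g$ is loxodromic, with $\xi_g^+\to\xi_{g_1}^+$ and $\xi_g^-\to\xi_{g_2}^-$.
\end{enumerate}

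To pass from the projective spaces back to the flag variety, I would use the fact that the maps $\rho_i:\mathcal P\to\mathbb P(V_i)$ jointly embed the flag variety; since the $\rho_i$ can be chosen with highest weights spanning, this follows from Lemma \ref{checkproximal} and Corollary \ref{Levicharacter2}. The pair $(\xi_h^+,\xi_h^-)$ is obtained the same way, and then Step 2 concludes.
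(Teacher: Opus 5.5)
Your proposal is correct and follows the paper's proof. You apply Proposition~\ref{asymptoticcrossratio} to pairs $(g^m,h^n)$ of loxodromic elements of $\Gamma$, and then use the density of the pairs $(\xi_g^+,\xi_g^-)$ in $\Lambda_\Gamma\times\Lambda_\Gamma^-$ together with openness of $\mathcal W$ and continuity of the cross ratio. The only difference is that the paper simply cites \cite[Sect.~3.6]{Be1} for this density, whereas you sketch Benoist's $g_1^m\gamma g_2^n$ argument; that sketch is valid here because you only need density near pairs $(\xi,\eta)$ already in general position, so $\xi_{g_1}^+$ and $\xi_{g_2}^-$ are automatically in general position by openness.
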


\begin{proof} This follows from Proposition \ref{asymptoticcrossratio} and the fact that the set of pairs 
$(\xi_g^+,\xi_g^-)$, where $g$ is a loxodromic element of $\Gamma$, is dense in $\Lambda_\Gamma\times \Lambda_{\Gamma}^-$ (see \cite[Sect. 3.6]{Be1}).
\end{proof}

\subsection{Properties of the next eigenvalues} 
Assume that ${\bf G}$ is semisimple and simply connected and keep the notation of Section \ref{secstructureZAb}. Fix $\alpha$ in $\Pi_a\setminus\Pi_i$ and let $\rho_\alpha:{\bf G}\rightarrow{\rm GL}(V_{\alpha})$ be an irreducible representation with highest weight $\chi_{\alpha}$. We denote by $V_{\alpha}^+\subset V_{\alpha}$ the 
highest weight line, that is, $V_{\alpha}^+=\{v\in V_{\alpha} |\forall t\in {\bf T}\quad \rho_\alpha(t)v=\chi_\alpha(t)v\}$. 
We fix a vector $v_\alpha\neq 0$ in $V_\alpha^+$ and a linear functional $\varphi_\alpha\neq 0$ whose kernel is exactly the ${\bf T}$-invariant complementary subspace of $V_{\alpha^+}$. We assume $\varphi_\alpha(v_\alpha)=1$.

With the notation of Section \ref{intro}, if $g$ is a loxodromic element of $G$, then $\chi_{\alpha}(\lambda(g))$ is the unique eigenvalue of $\rho_\alpha(g)$ with maximal modulus. The associated eigenspace is the line $\rho_{\alpha}(\gamma^{-1})V_{\alpha}^+$ where $\gamma\in G$ is such that $\gamma g_h\gamma^{-1}\in\exp(\mathfrak a^{++})$. Now, we want to analyze the next eigenvalues of $\rho_\alpha(g)$.  

If $\chi\neq\chi_{\alpha}$ is another weight of ${\bf T}$ in $V_\alpha$, then $\chi$ may be written as $\chi_\alpha-\alpha-\beta_1-\ldots-\beta_r$ for some $\beta_1,\ldots,\beta_r$ in $\Pi_a$. We denote by $V_{\alpha}^{\pm}$ the sum of those weight spaces associated with weights $\chi$ for which the elements $\beta_1,\ldots,\beta_r$ belong to $\Pi_i$, that is, they are simple roots with trivial restriction to ${\bf A}$. We denote by $p_\alpha:V_\alpha\rightarrow V_{\alpha}^{\pm}$ the ${\bf T}$-equivariant projection.

If $g$ is a loxodromic element of $G$, the eigenvalues of $\rho_{\alpha}(g)$ with maximal modulus $<|\chi_{\alpha}(\lambda(g))|$ are exactly the $\chi(\lambda(g))$ where $\chi$ is as above. They have modulus 
$|\alpha(\lambda(g))|^{-1}|\chi_{\alpha}(\lambda(g))|$. In particular, if $g_h$ is in $\exp(\mathfrak a^{++})$ and $g$ is in $T$, there exists $\varepsilon>0$ such that,
for $v$ in $V_\alpha$, as $n\rightarrow\infty$, we have 
\begin{multline}\label{eqasymptoticexp1}
\chi_{\alpha}(g)^{-n}\rho_{\alpha}(g^n)v=\\
\varphi_{\alpha}(v)v_\alpha+\chi_{\alpha}(g)^{-n}\rho_{\alpha}(g^n)p_{\alpha}v+
O(e^{-\varepsilon n}|\alpha(g)|^{-n}).\end{multline}
Note that, in the above, the norm of the vector $\chi_{\alpha}(g)^{-n}\rho_{\alpha}(g^n)p_{\alpha}v$ is precisely of the order of $|\alpha(g)|^{-n}$. 

This asymptotic expansion lies at the core of the proof of Theorem \ref{densityJordan}. 
As in Section \ref{crossratios}, $\xi_0$ is the fixed point of ${\bf P}$ in $\mathcal P$ and $\eta_0$ is the one of ${\bf P}^-$.
To ensure that the second term of the right-hand side of \eqref{eqasymptoticexp1} may be chosen to be nonzero, we will use

\begin{Prop}\label{nonzerogeneric1}
Let ${\bf G}$ be semisimple and simply connected and let $\alpha$ be in $\Pi_a\setminus\Pi_i$. For $\xi\in {\bf U}^-\xi_0$, a flag in general position with $\eta_0$, and $\eta\in {\bf U}\eta_0$, a flag in general position with $\xi_0$, we set 
$$\theta_\alpha(\xi,\eta)=\frac{\varphi(p_\alpha v)}{\varphi_\alpha(v)\varphi(v_\alpha)},$$
where $v\neq 0$ is in $\rho_\alpha(\xi)$ and $\varphi\neq 0$ is in $\rho_\alpha^\vee(\eta)$.

If $w_i\overline{\alpha}=\alpha$, we have $\theta_\alpha(U^-\xi_0\times U\eta_0)=\mathbb R$.

If $w_i\overline{\alpha}\neq\alpha$, we have $\theta_\alpha(U^-\xi_0\times U\eta_0)=\mathbb C$.
\end{Prop}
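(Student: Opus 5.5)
Since $\theta_\alpha$ is homogeneous of degree zero in $v$ and in $\varphi$, it is well defined on $U^-\xi_0\times U\eta_0$. I would begin by writing $\xi=u^-\xi_0$ and $\eta=u\eta_0$ with $u^-\in{\bf U}^-$, $u\in{\bf U}$, and taking the representatives $v=\rho_\alpha(u^-)v_\alpha$ and $\varphi=\rho_\alpha^\vee(u)\varphi_\alpha$. Since ${\bf U}^-$ lowers weights, $\varphi_\alpha(v)=1$. Dually, $\varphi(v_\alpha)=\varphi_\alpha(\rho_\alpha(u^{-1})v_\alpha)=1$, because ${\bf U}$ fixes $v_\alpha$. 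This gives
$$\theta_\alpha(u^-\xi_0,u\eta_0)=\varphi_\alpha\bigl(\rho_\alpha(u^{-1})\,p_\alpha\,\rho_\alpha(u^-)v_\alpha\bigr),$$
which is a polynomial function on ${\bf U}^-\times{\bf U}$ defined over $\mathbb R$ in suitable coordinates. The map is equivariant under ${\bf Z}$-conjugation and respects the real structure, so its values on real points lie in the natural real form. When $w_i\overline\alpha=\alpha$ I will show this real form is $\mathbb R$, by checking that complex conjugation maps $V_\alpha^\pm$ to itself compatibly with $\varphi_\alpha$; when $w_i\overline\alpha\neq\alpha$ the values are a priori in $\mathbb C$. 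The key step is to show that, restricted to real points, $\theta_\alpha$ is a nonconstant polynomial whose image contains an open set. Together with the scaling argument below, this gives all of $\mathbb R$, respectively all of $\mathbb C$.

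To get nonconstancy and the scaling, I would use the action of $Z$. For $z\in Z$, replacing $(u^-,u)$ by $(zu^-z^{-1},zuz^{-1})$ multiplies $\theta_\alpha$ by a character. Indeed, on $V_\alpha^\pm$ the torus acts by weights of the form $\chi_\alpha-\alpha-\sum\beta_j$ with $\beta_j\in\Pi_i$. These weights have the same restriction to ${\bf A}$ as $\chi_\alpha-\alpha$, and ${\bf Z}$ acts on $V_\alpha^\pm$ through an ${\bf Z}$-representation, which is in fact irreducible with lowest-type structure. The net effect on $\theta_\alpha$ is the character $\alpha^{-1}$ of $Z$, interpreted via $[Z,Z]$-invariance after projecting. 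More concretely, I would restrict to the one-parameter subgroups of $U^-$ and $U$ attached to the negative and positive restricted root $\beta=\alpha_{|{\bf A}}$. There $\theta_\alpha$ becomes a $Z$-equivariant bilinear pairing $\mathfrak g_{-\beta}\times\mathfrak g_\beta\to\mathbb C$, namely $(X,Y)\mapsto\varphi_\alpha(\rho_\alpha(Y)p_\alpha\rho_\alpha(X)v_\alpha)$ up to sign, since higher-order terms of $\exp$ do not hit $V_\alpha^\pm$ when we scale $X,Y$ small. This pairing is nonzero because $\mathfrak g_{-\beta}v_\alpha$ contains the $\chi_\alpha-\alpha$ weight vector (as $\langle\chi_\alpha,\alpha^\vee\rangle=1$) and $[\mathfrak g_\beta,\mathfrak g_{-\beta}]$ acts on $v_\alpha$ by a nonzero scalar.

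The image of a nonzero real bilinear map $\mathfrak g_{-\beta}(\mathbb R)\times\mathfrak g_\beta(\mathbb R)\to\mathbb C$ is a real cone stable under $\mathbb R$-scaling. When $w_i\overline\alpha=\alpha$ it lies in $\mathbb R$ and is therefore all of $\mathbb R$. When $w_i\overline\alpha\neq\alpha$, its image is stable under multiplication by $\alpha(Z)$, viewed through $\chi$-values in $\mathbb C^\star$. By Proposition~\ref{ZAb} this includes nonreal arguments, since $\alpha$ and $\overline{\alpha}$ restrict differently to the compact part of $Z$. Hence the image is stable under a dense subgroup of rotations together with positive scalings, and by continuity and polynomiality it is all of $\mathbb C$. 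The main obstacle is this last step: showing that the phase really rotates, i.e.\ that $\alpha$ restricted to $Z$ takes nonreal values exactly when $w_i\overline\alpha\ne\alpha$. I would try to deduce this from the description of the real structure on ${\rm X}({\bf Z}/[{\bf Z},{\bf Z}])$ in the proof of Proposition~\ref{ZAb}, using that $\chi_\alpha$ and $\overline{\chi_\alpha}=\chi_{w_i\overline\alpha}$ are independent characters.
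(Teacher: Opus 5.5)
There is a genuine gap in the case $w_i\overline{\alpha}\neq\alpha$: the rotation mechanism you rely on is false as stated. Under simultaneous conjugation by $z\in Z$, i.e.\ $(\xi,\eta)\mapsto(z\xi,z\eta)$, the function $\theta_\alpha$ is \emph{invariant}, not multiplied by a character. Indeed, $V_\alpha^{\pm}$ and $\ker\varphi_\alpha$ are sums of ${\bf A}$-weight spaces, so $p_\alpha$ commutes with $\rho_\alpha({\bf Z})$. Moreover $\varphi_\alpha\circ\rho_\alpha(z)=\chi_\alpha(z)\varphi_\alpha$ and $\rho_\alpha(z^{-1})v_\alpha=\chi_\alpha(z)^{-1}v_\alpha$, so the two factors cancel. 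Acting on one variable only gives no scalar either: ${\bf Z}$ acts irreducibly on $V_\alpha^{\pm}$ with highest weight $\chi_\alpha-\alpha$, in general in dimension $>1$. Also, $\alpha$ is not a character of ${\bf Z}$ unless it is orthogonal to $\Pi_i$, so ``$\alpha(Z)$'' has no meaning. The dichotomy you plan to prove also fails if you read it on the real torus $T$. There $\alpha(T)\not\subset\mathbb R$ as soon as $\overline\alpha\neq\alpha$, and this also happens when $w_i\overline\alpha=\alpha$, where $\theta_\alpha$ is real-valued (e.g.\ $\mathfrak{so}(4,1)$, $\alpha=e_1-e_2$, $\overline\alpha=e_1+e_2$). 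So no stability of the image under $\alpha(T)$ can hold on all of $\mathfrak g_{-\beta}(\mathbb R)\times\mathfrak g_{\beta}(\mathbb R)$.

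The missing idea, which is the paper's, is to use $Y\in\mathfrak g_{-\alpha}$, $X\in\mathfrak g_\alpha$ and the real points $\exp(Y+\overline Y)\xi_0$, $\exp(X+\overline X)\eta_0$. When $w_i\overline\alpha\neq\alpha$, the root $\overline\alpha=w_i(w_i\overline\alpha)$ has coefficient $0$ on $\alpha$. Hence $\chi_\alpha-\overline\alpha$ and $\chi_\alpha-\alpha+\overline\alpha$ are not weights, the conjugate parts drop out, and $\theta_\alpha=\de\chi_\alpha([X,Y])$. This is a nonzero \emph{complex}-bilinear function on $\mathfrak g_\alpha\times\mathfrak g_{-\alpha}$, hence onto $\mathbb C$. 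Your scaling idea can instead be rescued by letting the center $C$ of $Z$ act on $\xi$ alone. The roots of $\Sigma_i$ are trivial on $C$, so $\rho_\alpha(c)$ acts on $V_\alpha^\pm$ by the scalar $(\chi_\alpha-\alpha)(c)$, and $\theta_\alpha(c\xi,\eta)=\alpha(c)^{-1}\theta_\alpha(\xi,\eta)$. Since $\overline\alpha$ and $w_i\overline\alpha$ agree on $C$ while $\alpha-w_i\overline\alpha\notin\mathbb Q\Pi_i$, $\alpha(C^0)$ contains nonreal numbers exactly when $w_i\overline\alpha\neq\alpha$. Being connected and containing $\alpha(A^0)=\mathbb R^\star_+$, it is then all of $\mathbb C^\star$.

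The second gap is the inclusion $\theta_\alpha(U^-\xi_0\times U\eta_0)\subset\mathbb R$ when $w_i\overline\alpha=\alpha$, which you only assert. It cannot come from $\theta_\alpha$ being ``defined over $\mathbb R$'' in general, since that fails when $w_i\overline\alpha\neq\alpha$ (then $\theta_\alpha$ takes all complex values on real points). What is needed is a conjugate-linear involution of $V_\alpha$ with $\overline{\rho_\alpha(g)v}=\rho_\alpha(\overline g)\overline v$; this is Lemma~\ref{realform}. The equality $\overline{\chi_\alpha}=\chi_{w_i\overline\alpha}=\chi_\alpha$ gives a semilinear intertwiner $T$ with $T^2$ scalar. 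You must then use that $T$ preserves the ${\bf P}$-stable line $V_\alpha^+$ to get $T^2>0$, excluding a quaternionic structure. After that, $v_\alpha$, $\varphi_\alpha$, $p_\alpha$ and representatives of real flags can all be taken real. Alternatively, for $a\in\exp(\mathfrak a^{++})$ one has $\chi_\alpha([\xi_0,\eta_0,a^n\xi,\eta])^{-1}=1+\alpha(a)^{-n}\theta_\alpha(\xi,\eta)+o(\alpha(a)^{-n})$, and the left side is real because $\chi_\alpha(Z)\subset\mathbb R$.

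Your inclusion $\mathbb R\subset$ image also depends on this reality, since bilinearity only gives a line $\mathbb R c$ with $c\neq0$. The paper instead exhibits explicit real values $\de\chi_\alpha([X+\overline X,Y+\overline Y])$ or $\de\chi_\alpha([X,Y]+\overline{[X,Y]})$. The rest of your set-up is sound: the normalization $\varphi_\alpha(v)=\varphi(v_\alpha)=1$, the reduction to a bilinear pairing on $\mathfrak g_{-\beta}(\mathbb R)\times\mathfrak g_\beta(\mathbb R)$ (exact by ${\bf A}$-weight considerations, with no smallness needed), and its non-vanishing.
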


Note that the function $\theta_\alpha$ is well-defined: indeed, we can choose $v$ (resp. $\varphi$) to be of the form $\rho_\alpha(u^-)v_\alpha$ (resp. $\rho_\alpha^\vee(u)\varphi_\alpha$) for some $u^-\in {\bf U}^-$ (resp. $u\in {\bf U}$) and we then get
$$\varphi_\alpha(v)=\varphi_\alpha(\rho_\alpha(u^-)v_\alpha)=1\mbox{ and }\varphi(v_\alpha)=\varphi_\alpha(\rho_\alpha(u)^{-1}v_\alpha)=1.$$

For $\beta$ in $\Sigma_a$, we denote by $\mathfrak g_\beta$ the root space associated with $\beta$ in the Lie algebra of ${\bf G}$.

\begin{proof}[Proof of Proposition \ref{nonzerogeneric1} in case $w_i\overline{\alpha}\neq \alpha$] 
We take $X$ in $\mathfrak g_\alpha$ and $Y$ in $\mathfrak g_{-\alpha}$ and we will compute $\theta_\alpha(\exp(Y+\overline{Y})\xi_0,\exp(X+\overline{X})\eta_0)$.
This computation relies mainly on the fact that the weights $\chi\neq\chi_\alpha$ of $\rho_\alpha$ are of the form $\chi_\alpha-\alpha-\beta_1-\cdots-\beta_r$ for $\beta_1,\ldots,\beta_r$ in $\Pi_a$.

Any product of at least two element among $\de\rho_\alpha(Y)$ and $\de\rho_\alpha(\overline{Y})$ maps $v_\alpha$ to a vector in a weight space associated with a weight $\chi$ such that the restriction of $\chi_\alpha-\chi$ to ${\bf A}$ is not $\alpha$. Therefore, we get 
$$p_\alpha(\rho_\alpha(\exp(Y+\overline{Y})) v_\alpha)=(\de\rho_\alpha(Y)+\de\rho_\alpha(\overline{Y}))v_\alpha.$$ 
As $\overline{\alpha}=w_i\alpha+\beta_1+\ldots+\beta_r$ for some $\beta_1,\ldots,\beta_r\in\Pi_i$, the character $\chi_\alpha-\overline{\alpha}$ is not a weight 
of $\rho_\alpha$, hence $\de\rho_\alpha(\overline{Y})v_\alpha=0$ and $p_\alpha(\rho_\alpha(\exp(Y+\overline{Y})) v_\alpha)=\de\rho_\alpha(Y)v_\alpha$.
Also $\chi_\alpha-\alpha+\overline{\alpha}$ is not a weight, so that
$$\rho_\alpha(\exp(X+\overline{X}))p_\alpha(\rho_\alpha(\exp(Y+\overline{Y})) v_\alpha)=
(1+\de\rho_\alpha(X))\de\rho_\alpha(Y)v_\alpha.$$
We obtain
\begin{multline*}\theta_\alpha(\exp(Y+\overline{Y})\xi_0, \exp(X+\overline{X})\eta_0)=\varphi_{\alpha}(\de\rho_\alpha(X)\de\rho_\alpha(Y)v_\alpha)
\\=\varphi_\alpha(\de\rho_\alpha([X,Y])v_\alpha)=\de\chi_\alpha([X,Y]),\end{multline*}
where we have used the fact that $\de\rho_\alpha(X)v_\alpha=0$ and that the Lie bracket $[X,Y]$ belongs to the Lie algebra of ${\bf T}$.
Note that for $X\neq 0$ and $Y\neq 0$, we have $[X,Y]\neq 0$ (see \cite[Sect. 13.18]{Bo}, \cite[Sect. 8.3]{Humalg}, \cite[Sect. 26.2]{Humgp}). 
As $\sigma_\alpha(\mathfrak g_\alpha)=\mathfrak g_{-\alpha}$, the line $[\mathfrak g_\alpha,\mathfrak g_{-\alpha}]$ is the eigenspace associated with the eigenvalue $-1$ of $\sigma_\alpha$ in the Lie algebra of ${\bf T}$ and hence, since $\chi_\alpha$ and $\alpha$ are not orthogonal, $\de\chi_\alpha$ does not vanish on this line. Thus, we have shown that $\theta_\alpha(U^-\xi_0\times U\eta_0)=\mathbb C$. 
\end{proof}

In the case where $w_i\overline{\alpha} = \alpha$ in Proposition \ref{nonzerogeneric1}, for showing that $\theta_\alpha(U^-\xi_0\times U\eta_0)\subset\mathbb R$, we will use the following fact which is actually a consequence of \cite{Tits}:

\begin{Lem} \label{realform}
Let ${\bf G}$ be semisimple and simply connected and let $\alpha$ be in $\Pi_a\setminus\Pi_i$. If $w_i\overline{\alpha}=\alpha$, then the representation $\rho_\alpha$ preserves a real form of $V_\alpha$. In other words, there exists a semilinear involution $v\mapsto\overline{v}$ of $V_\alpha$ such that, for any $g$ in ${\bf G}$ and $v$ in $V_\alpha$,
$$\overline{\rho_\alpha(g) v}=\rho_{\alpha}(\overline{g})\overline{v}.$$
\end{Lem}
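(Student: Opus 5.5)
The plan is to build an antilinear intertwiner by comparing $\rho_\alpha$ with its complex conjugate, and then show that this intertwiner squares to a positive scalar.

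\emph{Step 1: an antilinear intertwiner.} Let $\overline{V_\alpha}$ be $V_\alpha$ with conjugated scalar multiplication, and define on it the representation $\rho'(g)=\rho_\alpha(\overline g)$. Since ${\bf G}$ is defined over $\mathbb R$, $\rho'$ is again a rational irreducible representation of ${\bf G}$. Because ${\bf T}$ is defined over $\mathbb R$, a weight space of weight $\chi$ for $\rho_\alpha$ becomes a weight space of weight $\overline\chi$ for $\rho'$. The weights of $\rho'$ are therefore the $\overline\chi$, $\chi$ a weight of $\rho_\alpha$. This set is $W$-invariant, so $\rho'$ has the $W$-orbit $W\overline{\chi_\alpha}$ as its orbit of extremal weights. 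The dominant element of this orbit is $w_i\overline{\chi_\alpha}$, since the involution $\alpha\mapsto w_i\overline\alpha$ preserves $\Sigma_a^+$ and $\Pi_a$ (Lemma \ref{roots1}). The proof of Proposition \ref{ZAb} gives $w_i\overline{\chi_\alpha}=\chi_{w_i\overline\alpha}=\chi_\alpha$ under our hypothesis $w_i\overline\alpha=\alpha$. Hence $\rho'\simeq\rho_\alpha$, which yields an antilinear bijection $J:V_\alpha\to V_\alpha$ with $J\rho_\alpha(g)=\rho_\alpha(\overline g)J$ for all $g$ in ${\bf G}$.

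\emph{Step 2: $J^2$ is a real scalar.} The map $J^2$ is linear and commutes with $\rho_\alpha({\bf G})$, because $\overline{\overline g}=g$. By Schur's lemma, $J^2=c\cdot{\rm id}$ with $c\in\mathbb C^\star$. Since $J$ commutes with $J^2$, we get $\overline c J=Jc=cJ$, so $c\in\mathbb R^\star$. If $c>0$, replacing $J$ by $c^{-1/2}J$ gives the required semilinear involution. Thus the whole lemma reduces to excluding $c<0$, the quaternionic case. I expect this to be the main obstacle, and it is where the hypothesis must really enter; Tits' theory handles it abstractly, but I would argue directly with the highest weight line.

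\emph{Step 3: $c>0$.} The map $J$ sends $V_\alpha^+$ to the weight line of weight $\overline{\chi_\alpha}=w_i\chi_\alpha$. Choose $n$ in the normalizer of ${\bf T}$ in $[{\bf Z},{\bf Z}]$ representing $w_i^{-1}$; this is possible because $W_i$ is the Weyl group of $[{\bf Z},{\bf Z}]$, whose roots are $\Sigma_i$. Set $J'=\rho_\alpha(n)J$. Then $J'$ is antilinear, preserves the line $V_\alpha^+$, and
$$J'^2=\rho_\alpha(n)J\rho_\alpha(n)J=\rho_\alpha(n)\rho_\alpha(\overline n)J^2=c\,\rho_\alpha(n\overline n).$$
Because ${\bf Z}$ is defined over $\mathbb R$, the element $n\overline n$ lies in $[{\bf Z},{\bf Z}]$. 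The line $V_\alpha^+$ is ${\bf P}$-invariant, so ${\bf Z}$ acts on it by the character $\chi_\alpha$, which is trivial on $[{\bf Z},{\bf Z}]$. Hence $J'^2v_\alpha=c\,v_\alpha$. On the other hand, writing $J'v_\alpha=a v_\alpha$ with $a\neq 0$, antilinearity gives $J'^2v_\alpha=\overline a\,a\,v_\alpha=|a|^2v_\alpha$. Therefore $c=|a|^2>0$, and Step 2 concludes the proof.

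The points to check carefully are the weight bookkeeping in Step 1 and the existence of the Weyl representative $n$ inside $[{\bf Z},{\bf Z}]$ in Step 3. Neither requires $n$ to be a real point, since only $\chi_\alpha(n\overline n)=1$ is used.
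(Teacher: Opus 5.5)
Your proof is correct and follows essentially the paper's route: you compare $\rho_\alpha$ with its conjugate $g\mapsto\rho_\alpha(\overline{g})$ on $\overline{V_\alpha}$, use uniqueness of the highest weight to get an antilinear intertwiner, apply Schur's lemma, and get positivity of the scalar from the action on the highest weight line. The only difference in Step 1 is that the paper identifies the highest weight via $\overline{\bf B}\subset{\bf P}$ rather than through your extremal-weight bookkeeping. The twist by $n$ in Step 3 is harmless but unnecessary: since $\alpha\notin\Pi_i$, $\chi_\alpha$ is orthogonal to $\Sigma_i$, so $w_i\chi_\alpha=\chi_\alpha$ and hence $\overline{\chi_\alpha}=\chi_\alpha$, which means $J$ already preserves $V_\alpha^+$ and you may take $n=e$.
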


\begin{proof} Recall that, by the proof of Proposition \ref{ZAb}, we have $\overline{\chi_\alpha}=\chi_\alpha$ 
Let $\overline{V_\alpha}$ be the conjugate space of $V_\alpha$, that is, $\overline{V_\alpha}$ admits the same underlying Abelian group as $V_\alpha$ but the action of $\mathbb C$ is now twisted by complex conjugacy. We define the representation $\overline{\rho_\alpha}:{\bf G}\rightarrow{\rm GL}(\overline{V_\alpha})$ as being given by, for $v$ in $\overline{V_\alpha}$, $\overline{\rho_\alpha}(g)v=\rho_\alpha(\overline{g})v$. Then, this representation is an irreducible rational representation of ${\bf G}$ in $\overline{V_\alpha}$. The Borel subgroup ${\bf B}\supset{\bf T}$ associated with the choice of the positive system $\Sigma_a^+\subset\Sigma_a$ is contained in the group ${\bf P}$ which is defined over $\mathbb R$. Hence, we get $\overline{\bf B}\subset{\bf P}$ and therefore, since $V_\alpha^+\subset V_\alpha$ is $\rho_\alpha({\bf P})$-invariant, the line $\overline{V_\alpha^+}\subset \overline{V_\alpha}$ is $\overline{\rho_\alpha}(\bf B)$-invariant. Thus it is the highest weight space of $\overline{\rho_\alpha}$ and, as $\overline{\chi_\alpha}=\chi_\alpha$, the highest weight of $\overline{\rho_\alpha}$ is $\chi_\alpha$. By the uniqueness theorem (see \cite[Sect. 31.3]{Humgp}), the two representations $\rho_\alpha$ and $\overline{\rho_\alpha}$ are conjugated. In other words, there exists a semilinear automorphism
$T:V_\alpha\rightarrow V_\alpha$ such that, for any $g$ in ${\bf G}$, $T\rho_\alpha(g)=\rho_\alpha(\overline{g})T$. As $T^2$ is a $\mathbb C$-linear automorphism which commutes with $\rho_\alpha$, by Schur's Lemma $T^2$ is a scalar $\mu$. By construction, the semilinear map $T$ preserves $V_{\alpha}^+$, hence there exists a complex number $\lambda$ with $Tv_\alpha=\lambda v_\alpha$. We obtain $T^2 v_\alpha =|\lambda|^2 v_\alpha$, hence, $\mu>0$. Up to replacing $T$ by $\mu^{-1/2}T$, we have $T^2=1$ and the lemma follows.
\end{proof}

We can now deal with the remaining case of the proposition.

\begin{proof}[Proof of Proposition \ref{nonzerogeneric1} in case $w_i\overline{\alpha}=\alpha$] 
Still pick $X$ in $\mathfrak g_\alpha$ and $Y$ in $\mathfrak g_{-\alpha}$. We will again compute $\theta_\alpha(\exp(Y+\overline{Y})\xi_0,\exp(X+\overline{X})\eta_0)$.

As before, any product of at least two element among $\de\rho_\alpha(Y)$ and $\de\rho_\alpha(\overline{Y})$ maps $v_\alpha$ to a vector in a weight space associated with a weight $\chi$ such that the restriction of $\chi_\alpha-\chi$ to ${\bf A}$ is not $\alpha$. Therefore, we get 
$$p_\alpha(\rho_\alpha(\exp(Y+\overline{Y})) v_\alpha)=(\de\rho_\alpha(Y+\overline{Y}))v_\alpha.$$ 
Reasoning in the same way, we obtain that 
\begin{multline*}\rho_\alpha(\exp(X+\overline{X}))p_\alpha(\rho_\alpha(\exp(Y+\overline{Y})) v_\alpha)=\\
(1+\de\rho_\alpha(X+\overline{X}))(\de\rho_\alpha(Y+\overline{Y}))v_\alpha,\end{multline*}
hence
\begin{multline}\label{computetheta1}
\theta_\alpha(\exp(Y+\overline{Y})\xi_0,\exp(X+\overline{X})\eta_0)=\\
\varphi_\alpha((\de\rho_\alpha(X+\overline{X}))(\de\rho_\alpha(Y+\overline{Y}))v_\alpha).\end{multline}

Now we again need to split the proof into two cases. First we assume that $\overline{\alpha}=\alpha$, so that $\overline{X}\in\mathfrak g_\alpha$ and 
$\overline{Y}\in\mathfrak g_{-\alpha}$. Then, \eqref{computetheta1} yields
\begin{multline*}
\theta_\alpha(\exp(Y)\xi_0,\exp(X)\eta_0)=
\varphi_\alpha(\de\rho_\alpha(X+\overline{X})\de\rho_\alpha(Y+\overline{Y})v_\alpha)\\
=\varphi_\alpha(\de\rho_\alpha([X+\overline{X},Y+\overline{Y}])v_\alpha)=\de\chi_\alpha([X+\overline{X},Y+\overline{Y}]),
\end{multline*}
where we have reasoned as in the first case. Using the properties of the Lie bracket $[\mathfrak g_{\alpha},\mathfrak g_{-\alpha}]$ in the same way,
we obtain $\mathbb R\subset\theta_\alpha(U^-\xi_0\times U\eta_0)$. 

Suppose now $\overline{\alpha}\neq\alpha$. As above, we write $\overline{\alpha}=\alpha+\beta_1+\ldots+\beta_r$ 
for some $r\geq 1$ and $\beta_1,\ldots,\beta_r\in\Pi_i$. Since neither $\chi_\alpha+\beta_1+\ldots+\beta_r$ nor $\chi_\alpha-\beta_1+\ldots+\beta_r$ are weights of $\rho_\alpha$, we get
$$\de\rho_\alpha(\overline{X})\de\rho_\alpha(Y)v_\alpha=\de\rho_\alpha(X)\de\rho_\alpha(\overline{Y})v_\alpha=0,$$
hence, by \eqref{computetheta1},
\begin{multline*}
\theta_\alpha(\exp(Y+\overline{Y})\xi_0,\exp(X+\overline{X})\eta_0)=\\
\varphi_\alpha((\de\rho_\alpha(X)\de\rho_\alpha(Y)+\de\rho_\alpha(\overline{X})\de\rho_\alpha(\overline{Y}))v_\alpha).\end{multline*}
Reasoning as above, this yields
\begin{multline*}
\theta_\alpha(\exp(Y+\overline{Y})\xi_0,\exp(X+\overline{X})\eta_0)=
\de\chi_{\alpha}([X,Y]+[\overline{X},\overline{Y}])\\=\de\chi_{\alpha}([X,Y]+\overline{[X,Y]}).\end{multline*}
As above, we get 
$\mathbb R\subset\theta_\alpha(U^-\xi_0\times U\eta_0)$. 

In both cases, to show the converse property, we let $v\mapsto\overline{v}$ be a real form of $V_\alpha$ as in Lemma \ref{realform}.
Since the weights of $\rho_\alpha$ are $\chi_\alpha$ and weights $\leq\chi_\alpha-\alpha$, the set of weights of the form $\chi_\alpha-\alpha-\beta_1-\ldots-\beta_r$ with $\beta_1,\ldots,\beta_r\in\Pi_i$ is invariant under conjugacy and therefore, the operator $p_\alpha$ commutes with conjugacy. Now, for $\xi\in U^-\xi_0$ and $\eta\in U\eta_0$, as $\overline{\xi}=\xi$ and $\overline{\eta}=\eta$, the lines $\rho_\alpha(\xi)\in\mathbb P(V_\alpha)$ and $\rho_\alpha^\vee(\eta)\in\mathbb P(V_\alpha^*)$ are real and we may choose generators $v$ and $\varphi$ of these lines which are real. Also, we can assume $v_\alpha$ and $\varphi_\alpha$ to be real. Then, $\theta_\alpha(\xi,\eta)=\frac{\varphi(p_\alpha v)}{\varphi_\alpha(v)\varphi(v_\alpha)}$ is real.
\end{proof}

\subsection{Conclusion} We now finish the proof of Theorem \ref{densityJordan}. We need one final auxiliary result. 

\begin{Lem}\label{toricmeasure} 
Let $H\simeq \mathbb R^d/\mathbb Z^d$ be a compact and connected Abelian Lie group and let $f:H\rightarrow \mathbb C$ be a function which is a finite sum of characters. Suppose there exists $h$ in $H$ with $f(h)\notin\mathbb R$. Then, the set $f^{-1}(\mathbb R)$ has zero Haar measure. 
\end{Lem}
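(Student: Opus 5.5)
The plan is to pass from the real-valued condition to the vanishing of a single trigonometric polynomial, and then to show that the zero set of a nonzero trigonometric polynomial on $\mathbb R^d$ is Lebesgue-null.

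Write $f=\sum_{k=1}^N c_k\chi_k$ with distinct characters $\chi_k$ of $H$. Lifting to $\mathbb R^d$ via the covering map, each $\chi_k$ becomes $x\mapsto e^{2i\pi\langle m_k,x\rangle}$ for distinct $m_k\in\mathbb Z^d$. The Haar measure of $H$ corresponds to Lebesgue measure on the fundamental domain $[0,1)^d$, so it suffices to prove that the set $E=\{x\in\mathbb R^d \mid \operatorname{Im} F(x)=0\}$ has Lebesgue measure zero, where $F$ is the lift of $f$. Now
$$g=\operatorname{Im} F=\frac{1}{2i}\Bigl(\sum_k c_k e^{2i\pi\langle m_k,x\rangle}-\sum_k \overline{c_k}e^{-2i\pi\langle m_k,x\rangle}\Bigr)$$
is a real trigonometric polynomial, hence a real-analytic function on $\mathbb R^d$. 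The hypothesis that $f(h)\notin\mathbb R$ for some $h$ says exactly that $g$ is not identically zero.

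The key step is the classical fact that a real-analytic function on a connected open subset of $\mathbb R^d$ which is not identically zero has a Lebesgue-null zero set. I would prove it by induction on $d$. For $d=1$ the zeros are isolated, since a nonzero analytic function of one variable cannot vanish on a set with an accumulation point; so the zero set is countable. For the inductive step, write $x=(x',x_d)$. Since $g\not\equiv0$, some partial derivative $\partial_{x_d}^k g$ at some point is nonzero; more simply, the set of $x'$ for which $t\mapsto g(x',t)$ vanishes identically is the common zero set of the analytic functions $x'\mapsto\partial_{x_d}^k g(x',0)$, $k\geq0$. At least one of these is not identically zero, otherwise $g$ would vanish identically by analyticity in $x_d$ (here $g$ is even entire in each variable). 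By induction that exceptional set of $x'$ is null in $\mathbb R^{d-1}$. For every other $x'$ the slice of the zero set is countable by the case $d=1$. Fubini then gives measure zero.

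Alternatively, one can avoid analysis altogether by a purely algebraic route. Choose an integer vector $u$ such that the integers $\langle m_k,u\rangle$ (and their negatives, which appear in $g$) separate the distinct frequencies. Then along each line $x+\mathbb R u$ the function $g$ is a nonzero one-variable trigonometric polynomial for almost every $x$, with finitely many zeros per period, and Fubini again concludes. The only point requiring care, which I expect to be the main (mild) obstacle, is ensuring the restricted polynomial is nonzero. After grouping the terms of $g$ by frequency $\pm m_k$, the coefficients are those of the nonzero function $g$, so at least one frequency has nonzero coefficient. A generic choice of $u$ keeps all frequencies distinct after projection, so the restriction is nonzero for every $x$. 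The analytic induction above sidesteps this bookkeeping, so I would present that version.
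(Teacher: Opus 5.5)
Your proof is correct, but it takes a different route from the paper. The paper's proof is a one-line appeal to real algebraic geometry. It views $H$ as the compact torus inside $(\mathbb C^\star)^d$, a real algebraic variety on which characters are monomials. Then $f^{-1}(\mathbb R)$, the zero set of the polynomial function $\mathrm{Im}\, f$, is a proper real algebraic subset of $H$, and dimension theory for real (semi)algebraic sets (Bochnak--Coste--Roy) gives it dimension $<d$, hence Haar measure zero.

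You make the same reduction to the zero set of $\mathrm{Im}\, f\not\equiv 0$, but you replace the algebraic key lemma with a hands-on analytic one. You lift to $\mathbb R^d$, where $\mathrm{Im}\, f$ is a real trigonometric polynomial, and show its zero set is null by induction on $d$ plus Fubini. Your second variant is even more elementary: restricting to lines in a generic rational direction, it only needs that a nonzero one-variable trigonometric polynomial has finitely many zeros per period.

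What your version buys is self-containedness and transparency about where connectedness of $H$ is used: it enters through the lift to the connected space $\mathbb R^d$, where the identity theorem applies. In the paper, connectedness enters only implicitly, through the irreducibility of $H$ as a real algebraic variety, which is not spelled out. The paper's version is shorter given the reference, and it yields more structure (a semialgebraic set of positive codimension) than the lemma needs.

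One small caveat: your induction as written proves the nullity statement for analytic functions on all of $\mathbb R^d$, not on an arbitrary connected open set as you announce. It uses Taylor expansion in $x_d$ at $0$ and analyticity along whole lines, and the general case would need localization to boxes. That is harmless here, since $\mathbb R^d$ is all you need and trigonometric polynomials are stable under $x'\mapsto\partial_{x_d}^k g(x',0)$.
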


\begin{proof} This follows from the fact that $f^{-1}(\mathbb R)$ is a proper real algebraic subset of $H$, when $H$ is viewed as a real algebraic subvariety in $(\mathbb C^\star)^d$ (see \cite[Chap. 3]{BoCoRo}). 
\end{proof}

We can now conclude the

\begin{proof}[Proof of Theorem \ref{densityJordan}] 
First, we assume that ${\bf G}$ is semisimple and simply connected (the general case will follow by elementary reductions). We prove the result by contradiction. Set $\Theta$ to be the closed subgroup of $Z/[Z,Z]$ spanned by the $\lambda(gh)\lambda(g)^{-1}\lambda(h)^{-1}$, where $g,h$ are loxodromic elements of $\Gamma$ such that $gh$ is loxodromic. Assume $\Theta$ is not an open subgroup of $Z/[Z,Z]$. Then, the Lie algebra of $\Theta$ is a proper subspace of the Lie algebra of $Z/[Z,Z]$. Since the connected component of $\Theta$ is the exponential of its Lie algebra, by Proposition \ref{ZAb}, we may find a family $(c_\alpha)_{\alpha\in\Delta}$ of complex numbers which are not all zero such that $c_\alpha\in \mathbb R$ when $w\overline{\alpha}=\alpha$ and, for any $z$ in $\Theta$, which is close enough to $e$,
$$\sum_{\alpha\in\Delta}\Re(c_\alpha\log\chi_{\alpha}(z))=0,$$
(where, if $w\overline{\alpha}\neq\alpha$, we have used the standard determination of the logarithm).
By Corollary \ref{asymptoticcrossratio2}, for all $(\xi,\eta,\xi',\eta')$ in $\mathcal W\cap\Lambda_\Gamma\times\Lambda_\Gamma^-\times\Lambda_\Gamma\times\Lambda_\Gamma^-$, if their cross ratio is close enough to $e$, we have
\begin{equation}\label{eqcrossratiosum}\sum_{\alpha\in\Delta}\Re(c_\alpha\log\chi_{\alpha}([\xi,\eta,\xi',\eta']))=0,\end{equation}
We will reach a contradiction by producing cross ratios on which the above sum does not vanish.

More precisely, by \cite[Lemma 7.21]{BQ}, we may find a loxodromic element $g$ in $\Gamma$ such that the numbers $\de\beta(\lambda_{\mathbb R})(g)$ are all distinct, when $\beta$ runs in the set of positive restricted roots $\Pi_r$. In other words, this says that the numbers $\alpha(\lambda(g))$ all have different moduli, when $\alpha$ runs in the set $\Delta$. Up to replacing $\Gamma$ by a conjugate, we can assume $g_h$ belongs to $\exp(\mathfrak a^{++})$ and $g$ belongs to $T$. Then, in particular, we have $\xi_0\in \Lambda_{\Gamma}$ and $\eta_0\in\Lambda_\Gamma^-$.

Let $\alpha\in\Delta$ be such that $c_\alpha\neq 0$ and that the number $|\alpha(g)|>1$ is minimal. As $\Lambda_\Gamma^-$ is Zariski dense in $\mathcal P$, we may choose $\eta$ in $\Lambda_\Gamma^-$ which is in general position with $\xi_0$ and such that $p_\alpha^*\rho_\alpha^\vee(\eta)\neq 0$. As $\Lambda_\Gamma$ is Zariski dense in $\mathcal P$, by Proposition \ref{nonzerogeneric1},
we may find $\xi$ in $\Lambda_\Gamma$ which is in general position with $\eta_0$ and $\eta$ and such that, for $v\neq 0$ in $\rho_\alpha(\xi)$ and $\varphi\neq 0$ in $\rho_\alpha^\vee(\eta)$, we have
\begin{equation}\label{nonzerogeneric}
\Re\left(c_\alpha \frac{\varphi (p_\alpha v)}{\varphi_\alpha(v)\varphi(v_\alpha)}\right)\neq 0.\end{equation}

Let $H\subset (\mathbb C^*)^{\Pi_i}$ be the closed subgroup spanned by the element $h=(\beta(g))_{\beta\in\Pi_i}$ (recall that for each $\beta$ in $\Pi_i$, we have $|\beta(g)|=1$). Up to replacing $g$ by a power, we may assume $H$ to be connected. Then, there exists a function $f:H\rightarrow \mathbb C$, which is a finite sum of characters, such that, for any $n\geq 0$,
$$f(h^n)=\frac{1}{\varphi_\alpha(v)\varphi(v_\alpha)}\frac{\alpha(g)^n}{\chi_{\alpha}(g)^n}\varphi(g^np_{\alpha}v).$$

For $\beta$ in $\Delta$, by Lemma \ref{representcrossratio}, we have
$$\chi_\beta([\xi_0,\eta_0,g^n\xi,\eta])=\frac{\varphi_\beta(\rho_\beta(g^n)v)\varphi(v_\beta)}{\varphi(\rho_\beta(g^n)v)}=
\frac{\varphi_\beta(v)\varphi(v_\beta)}{\varphi(\chi_{\beta}(g)^{-n}\rho_\beta(g^n)v)}.$$
Now, notice that, for $\beta\neq\alpha$ in $\Delta$ with $c_\beta\neq 0$, the assumption that the modulus of $\alpha(g)$ is minimal and \eqref{eqasymptoticexp1} yield, as $n\rightarrow\infty$, for some $\varepsilon>0$,
$$\log\chi_\beta([\xi_0,\eta_0,g^n\xi,\eta])=O(e^{-\varepsilon n}|\alpha(g)|^{-n}).$$
Besides, still by \eqref{eqasymptoticexp1}, we obtain
$$\log\chi_\alpha([\xi_0,\eta_0,g^n\xi,\eta])=-\alpha(g)^{-n}f(h^n)+O(e^{-\varepsilon n}|\alpha(g)|^{-n}).$$
By \eqref{nonzerogeneric}, we have $\Re(c_\alpha f(e))\neq 0$. Hence, by Lemma \ref{toricmeasure}, the set $f^{-1}(c_\alpha^{-1}i\mathbb R)$ has Haar measure zero in $H$. Since the sequence $(h^{n})_{n\geq 0}$ is equidistributed in $H$, we may find an infinite subset $S\subset\mathbb N$ such that, as $n\rightarrow\infty$, $n\in S$,
$|\Re(c_{\alpha} f(h^n)|$ is bounded below by a positive constant. 
Consequently, we obtain 
$$\left|\sum_{\beta\in\Delta}\Re(c_\beta\log\chi_{\beta}([\xi_0,\eta_0,g^n\xi,\eta]))\right|\underset{\substack{n\rightarrow\infty\\ n\in S}}\asymp
|\alpha(g)|^{-n},$$
which contradicts \eqref{eqcrossratiosum} and the result follows in case ${\bf G}$ is semisimple and simply connected.

In the general case, we let $\widetilde{\bf S}$ be the universal cover of ${\bf S}=[{\bf G},{\bf G}]$ and ${\bf C}\subset {\bf G}$ be the center of ${\bf G}$. As usual, we write $C={\bf C}(\mathbb R)$ and $\widetilde{S}=\widetilde{\bf S}(\mathbb R)$. The natural map ${\bf C}\times \widetilde{\bf S}\rightarrow {\bf G}$ is surjective and therefore the range of the map $C\times \widetilde{S}\rightarrow G$ is a normal subgroup $G'\subset G$ which is open for the locally compact topology and hence has finite index. Therefore, the Zariski closure of $\Gamma\cap G'$ is a finite index subgroup of ${\bf G}$. Since ${\bf G}$ is connected, $\Gamma\cap G'$ is still Zariski dense in ${\bf G}$. After having replaced $\Gamma$ by the projection onto $\widetilde{S}$ of the inverse image of $\Gamma\cap G'$ in $C\times\widetilde{S}$, we are brought back to the case where ${\bf G}$ is semisimple and simply connected.
\end{proof}


\begin{thebibliography}{10}

\bibitem{BoCoRo}
J. Bochnak, M. Coste and M.-F. Roy,
\emph{Real Algebraic Geometry},
Ergebnisse der Mathematik und ihrer Grenzgebiete 36,
Springer, Heidelberg, 2010.


\bibitem{Bo}
A. Borel,
\emph{Linear Algebraic Groups},
Graduate Texts in Mathematics 126,
Springer, New York, 1991.


\bibitem{Be1}
Y. Benoist,
Propri\'et\'es asymptotiques des groupes lin\'eaires. 
\emph{Geom. Funct. Anal.} \textbf{7} (1997), 1--47.


\bibitem{Be2}
Y. Benoist,
Propri\'et\'es asymptotiques des groupes lin\'eaires (II), 
\emph{Adv. Stud. Pure Math.} \textbf{26} (2000), 33--48.


\bibitem{BQ}
Y. Benoist and J.-F. Quint,
\emph{Random Walks on Reductive Groups},
Ergebnisse der Mathematik und ihrer Grenzgebiete 62,
Springer, Cham, 2016.


\bibitem{CG}
J.-P. Conze and Y. Guivarc'h,
Densit\'e d'orbites d'actions de groupes lin\'eaires et propri\'et\'es d'\'equidistribution de marches al\'eatoires,
in \emph{Rigidity in Dynamics and Geometry},
M. Burger and A. Iozzi (eds.),
Springer, Berlin, 2002, 39--76.


\bibitem{GQX}
I. Grama, J.-F. Quint and H. Xiao,
Local limit theorem for the operator norm of products of random matrices,
arXiv:2607.19873,  2026, 1--63. 


\bibitem{GR}
Y. Guivarc'h and A. Raugi,
Actions of large semigroups and random walks on isometric extensions of boundaries,
\emph{Ann. Sci. \'Ec. Norm. Sup\'er.} \textbf{40} (2007), 209--249.


\bibitem{Humalg}
J. E. Humphreys,
\emph{Introduction to Lie Algebras and Representation Theory},
Graduate Texts in Mathematics 9,
Springer, New York, 1972.


\bibitem{Humgp}
J. E. Humphreys,
\emph{Linear Algebraic Groups},
Graduate Texts in Mathematics 21,
Springer, New York, 1981.


\bibitem{P}
G. Prasad,
$\mathbb{R}$-regular elements in Zariski-dense subgroups,
\emph{Q. J. Math.} \textbf{45} (1994), 541--545.


\bibitem{Q1}
J.-F. Quint,
Groupes de Schottky et comptage,
\emph{Ann. Inst. Fourier (Grenoble)} \textbf{55} (2005), 373--429.


\bibitem{Tits}
J. Tits,
Repr\'esentations lin\'eaires irr\'eductibles d'un groupe r\'eductif sur un corps quelconque,
\emph{J. Reine Angew. Math.} \textbf{247} (1971), 196--220.


%
%
%
%
%
%
%
%
%
%
%
%

\end{thebibliography}
\end{document}